\documentclass[reqno,11pt]{amsart}

\newtheorem{theorem}{Theorem}[section]
\newtheorem{lemma}[theorem]{Lemma}
\newtheorem{proposition}[theorem]{Proposition}

\theoremstyle{definition}

\theoremstyle{remark}

\newcommand{\mysection}[1]{\section{#1}
\setcounter{equation}{0}}

\newcommand{\bR}{\mathbb R}

\newcommand\cL{\mathcal{L}}

\def\bH{\mathbb{H}}
\def\cH{\mathcal{H}}

\newcommand{\esssup}{\operatorname*{ess\,sup}}
\renewcommand{\epsilon}{\varepsilon}

\begin{document}
\title[Partial regularity for 4D NSE]{Partial Regularity of solutions to the Four-Dimensional Navier-Stokes Equations}
\author[H. Dong]{Hongjie Dong}
\address[H. Dong]{Division of Applied Mathematics, Brown University,
182 George Street, Providence, RI 02912, USA}
\email{Hongjie\_Dong@brown.edu}

\author[X. Gu]{Xumin Gu}
\address[X. Gu]{School of Mathematical Sciences, Fudan University,
Shanghai 200433, People's Republic of China}
\email{xumin\_gu@brown.edu;11110180030@fudan.edu.cn}

\begin{abstract}
In this paper, we consider  suitable weak solutions of incompressible Navier--Stokes equations in four spatial dimensions. We prove that the two-dimensional time-space Hausdorff measure of the set of singular points is equal to zero.
\end{abstract}

\date{\today}
\subjclass{35Q30, 76D03, 76D05}

\keywords{Navier-Stokes equations, partial regularity, Hausdorff's
dimension}

\maketitle

\mysection{Introduction}
In this paper we consider the incompressible Navier--Stokes equations in four spatial dimensions with unit viscosity and an external force:
\begin{eqnarray}
\label{ns}
u_t +u\cdot \nabla u -\Delta u + \nabla p &=& f\\
\nabla \cdot u &=& 0
\label{inc}
\end{eqnarray}
in a bounded cylindrical domain $Q_T\equiv\Omega\times(0,T)$, where $\Omega \subset \mathbb{R}^4$. We are interested in the partial regularity of suitable weak solutions $(u,p)$ to \eqref{ns}-\eqref{inc}.

We say that a pair of functions $(u,p)$ is a suitable weak solution to \eqref{ns}-\eqref{inc} in $Q_T$ if $u \in L_{\infty}(0,T;L_2(\Omega;\mathbb{R}^4))\cap L_2(0,T;W_2^1(\Omega;\mathbb{R}^4))$ and $p \in L_{3/2}(Q_T)$ satisfy \eqref{ns}-\eqref{inc} in the weak sense and additionally the generalized local energy inequality holds for all non-negative functions
$\psi \in C_0^{\infty}(Q_T)$:
\begin{multline}
\esssup_{0<s\leq t}\int_{\Omega}|u(x,s)|^2\psi(x,s) \,dx +2\int_{Q_t}|\nabla u|^2\psi \,dx\,ds\\
\leq \int_{Q_t}|u|^2(\psi_t+\Delta \psi)+(|u|^2+2p)u\cdot \nabla \psi +f\cdot u \psi \,dx\, ds.
\label{energy}
\end{multline}

We will prove that for any suitable weak solution $(u,p)$, the two dimensional space-time Hausdorff measure of the set of singular points is equal to zero.

The problem of the global regularity of solutions to the Navier--Stokes equations in three and higher space dimensions is a fundamental question in fluid dynamics and is still widely open. Meanwhile, many authors have studied the partial regularity of solutions. In three dimensional case, Scheffer established various results for weak solutions in \cite{Scheffer_76, Scheffer_77}. In a celebrated paper \cite{CKN_82}, Caffarelli, Kohn, and Nirenberg firstly introduced the notion of suitable weak solutions, which satisfy a local energy inequality. They proved that for any suitable weak solution, there is an open subset where the velocity field $u$ is regular and the 1D Hausdorff measure of the complement of this subset is equal to zero. In \cite{Lin_98}, Lin gave a more direct and simplified proof of Caffarelli, Kohn and Nirenberg's result with zero external force. Ladyzhenskaya and Seregin gave a detailed account in \cite{Lady_99} later. We also refer the reader to Tian and Xin \cite{Tian_99}, He \cite{He_04}, Seregin \cite{Seregin_06}, Gustafson, Kang, and Tsai \cite{Gus_07}, Vasseur \cite{Vasseur_07}, Kukavica \cite{Kukavica_08}, and the references therein for extended results.

For the four or higher dimensional Navier--Stokes equations, the problem is more super-critical. In \cite{Scheffer_78}, Scheffer showed that there exists a weak solution $u$ in $\mathbb{R}^4 \times \mathbb{R}^{+}$, which may not satisfy the local energy inequality, such that $u$ is continuous except for a set whose 3D Hausdorff measure is finite. In \cite{Dong_07}, Dong and Du showed that for any local-in-time smooth solution to 4D Navier--Stokes equations, the 2D Hausdorff measure of the set of singular points at the first potential blow-up time is equal to zero. Moreover, for stationary high dimensional Navier--Stokes equations, Struwe \cite{struwe_88} proved that suitable weak solutions are
regular outside a singular set of zero 1D Hausdorff measure in $\mathbb{R}^5$, and Kang \cite{kang_06} improved Struwe's result up to the boundary for a smooth domain $\Omega \subset \mathbb{R}^5$. Recently, Dong and Strain \cite{Dong_12} studied the partial regularity for suitable weak solutions of 6D stationary Navier--Stokes equations, and proved that solutions are regular outside a singular set of zero 2D Hausdorff measure. Based on Campanato's approach, the main idea in \cite{Dong_12} is to first establish a weak decay estimate of certain scaling invariant quantities, and then successively improve this decay estimate by a bootstrap argument and the
elliptic regularity theory.

Because time corresponds two space dimensions, in some sense the 4D non-stationary Navier--Stokes equations is similar to 6D stationary problem. Given the result in \cite{Dong_12}, it is natural to ask whether Caffarelli--Kohn--Nirenberg's theorem can be extended the 4D non-stationary case. Here the main difficulty stems from the fact that certain compactness arguments appeared, for instance, in the original paper \cite{CKN_82} as well as \cite{Lin_98, Lady_99} break down in the 4D case. We note that the results obtained in the \cite{Dong_07} cannot be considered as a genuine extension of the theorem, as the set of singular points is only estimated at the first blow-up time for local smooth solutions. The objective of this paper is to give a complete answer to this question.

We state our main results, where we use some notation introduced at the beginning of the next section.
\begin{theorem}
\label{mainthm}
Let $\Omega$ be a open set in $\mathbb{R}^4$, $f \in L_{6,\text{loc}}(Q_T)$.  Let $(u,p)$ be a suitable weak solution of \eqref{ns}-\eqref{inc} in $Q_T$. There is a positive number $\epsilon_0$ satisfying the following property. Assume that for point a $z_0 \in Q_T$, the inequality
\begin{equation*}
\limsup_{r\searrow 0} E(r) \leq \epsilon_0
\end{equation*}
holds. Then $z_0$ is a regular point.
\end{theorem}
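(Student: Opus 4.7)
The plan is to follow the Campanato-type approach that Dong and Strain introduced for the six-dimensional stationary problem, adapting it to the parabolic four-dimensional setting. Working on the parabolic cylinder $Q_r$ centered at $z_0$, the natural scaling-invariant quantities are
\[
A(r) = \sup_{t}\, r^{-2}\!\!\int_{B_r}|u|^2,\quad E(r) = r^{-2}\!\!\int_{Q_r}|\nabla u|^2,\quad C(r) = r^{-3}\!\!\int_{Q_r}|u|^3,\quad D(r) = r^{-3}\!\!\int_{Q_r}|p|^{3/2}.
\]
The overall strategy breaks into four steps: (i) couple these quantities through the local energy inequality \eqref{energy} and the pressure equation, (ii) use the smallness of $E$ to derive a \emph{weak} decay estimate, (iii) bootstrap this weak decay to polynomial decay in $r$, and (iv) invoke a Morrey--Campanato embedding together with parabolic Schauder theory to upgrade the Morrey bound to H\"older continuity and then to full regularity of $u$ at $z_0$.

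Concretely, step (i) has three ingredients. First, testing \eqref{energy} against a properly chosen cutoff yields a bound of the form $A(r/2)+E(r/2)\leq C\bigl(C(r)+D(r)\bigr)+C\|f\|_{L_6(Q_r)}^2 r^{\alpha}$ via an average-subtraction argument. Second, H\"older and Sobolev inequalities give an interpolation estimate $C(r)\leq C A(r)^{\gamma}E(r)^{1-\gamma}$ with $\gamma$ dictated by the 4D parabolic embedding. Third, using $-\Delta p = \partial_i\partial_j(u_i u_j)$ on a slightly enlarged ball and splitting $p = p_1+p_2$ into a Newtonian part controlled by local Calder\'on--Zygmund estimates and a harmonic remainder controlled by the mean-value property, one obtains $D(\theta r)\leq C\theta^{3/2}D(r)+C\theta^{-b}C(r)$ for $\theta\in(0,1/2)$; this is the only place a genuine decay factor is produced. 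In step (ii) the three inequalities are combined into a single functional $\Phi(r)$, a weighted sum of $A$, $E$, $C^{2/3}$ and $D^{2/3}$, which under $\limsup_{r}E(r)\leq\epsilon_0$ with $\epsilon_0$ small satisfies a contraction $\Phi(\theta r)\leq\tfrac12 \Phi(r)+\mathrm{l.o.t.}$ for a fixed small $\theta$. Iterating gives polynomial decay, and (iii) is a standard bootstrap improving the resulting Morrey exponent.

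The main obstacle is exactly the four-dimensional criticality flagged in the introduction: the interpolation bounding $C$ in terms of $A$ and $E$ is scaling-critical in 4D, so unlike in the three-dimensional Caffarelli--Kohn--Nirenberg argument no small factor is gained at this step, and the compactness reductions of \cite{CKN_82, Lin_98, Lady_99} simply do not close. All of the contraction must therefore be extracted from the pressure splitting, which forces a fully quantitative iteration. The most delicate technical points are to balance the exponents so that $\Phi$ genuinely contracts once $E$ is small, and to handle the forcing term $f\in L_{6,\mathrm{loc}}$ at the correct scaling so that it enters only as a true lower-order remainder.
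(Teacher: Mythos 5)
Your outline matches the paper's Campanato-type strategy at a high level, but step (iv) as written has a genuine gap. The bootstrap in step (iii) does \emph{not} produce a Morrey exponent large enough to conclude: the exponent iteration for $A(\rho)+E(\rho)\lesssim\rho^{\alpha_k}$ obeys roughly $\alpha_{k+1}=12\alpha_k/(10+\alpha_k)$, which would tend to $2$, but is capped at $5/3$ by the linear decay rate $\gamma^{5/2}$ in the pressure estimate, giving only $\int_{Q_r}|\nabla u|^2\lesssim r^{2+5/3}$. In parabolic dimension $6$, Morrey's lemma requires decay $r^{4+\epsilon}$, so there is no Morrey bound for a Campanato or Schauder argument to ``upgrade''; the plan stalls at $5/3$. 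The missing idea is a caloric replacement: on a good time-slice solve the homogeneous heat equation $v$ on a parabolic cylinder with boundary data $u$, bound $\nabla v$ pointwise by trace estimates, and estimate $\nabla(u-v)$ in $L_{3/2}$ by parabolic $L_p$ theory applied to the equation driven by $|u|^2$, $p-[p]$, and $f$. Balancing the two contributions (choosing the inner radius $r\sim\rho^{29/24}$) yields $\int_{Q_r}|\nabla u|^{3/2}\lesssim r^\beta$ with $\beta>6-3/2$; then the Poincar\'e-type inequality for divergence-form parabolic equations (Lemma~\ref{lem11.31}) converts this into decay of the $L_{3/2}$-mean oscillation of $u$, and Campanato's characterization gives H\"older continuity.

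A second, smaller issue is that the iteration in steps (ii)--(iii) cannot be run with only $A,E,C,D$ as you set it up. The Calder\'on--Zygmund estimate for the Newtonian part of $p$ in $L_{3/2}(B_r)$ uses $u\in L_3(B_r)$, which in four dimensions is bounded via $A^{1/2}E$ rather than by $E^{3/2}$ alone (the end-point embedding $W_2^1\hookrightarrow L_3$ available in the 6D stationary problem is absent), and the resulting $D$-iteration does not mesh cleanly with the backward-heat-kernel estimate for $A+E$. The paper introduces an auxiliary mixed space-time norm $F$ of the pressure oscillation with a tunable exponent $1+\alpha$, chosen so that the harmonic-part decay rate $(3-\alpha)/(1+\alpha)>2$ dominates the kernel weight, and iterates $A+E+F$ (tracking $C,D$ as superlinear remainders via $C\lesssim(A+E)^{3/2}$ and $D\lesssim(A+E+F)^{3/2}$). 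Also note that your $D$ should be defined with the spatial mean subtracted, $|p-[p]_{x_0,r}|^{3/2}$; without that, the harmonic remainder in the pressure splitting cannot be controlled on smaller balls.
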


\begin{theorem}
\label{th2}
Let $\Omega$ be a open set in $\mathbb{R}^4$ $f \in L_{6,\text{loc}}(Q_T)$. Let $(u,p)$ be a suitable weak solution of \eqref{ns}-\eqref{inc} in $Q_T$.
There is a positive number $\epsilon_0$ satisfying the following property. Assume that for a point $z_0 \in Q_T$ and for some $\rho_0 >0$ such that $Q(z_0,\rho_0) \subset Q_T$ and
\begin{equation*}
C(\rho_0)+D(\rho_0)+F(\rho_0)+G(\rho_0) \leq \epsilon_0.
\end{equation*}
Then $z_0$ is a regular point.
\end{theorem}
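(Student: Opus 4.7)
The plan is to reduce Theorem \ref{th2} to Theorem \ref{mainthm} by propagating the smallness of $C(\rho_0)+D(\rho_0)+F(\rho_0)+G(\rho_0)$ at one fixed scale to smallness of $E(r)$ on every sufficiently small parabolic cylinder, via an iteration on scaling-invariant quantities. Once $\limsup_{r\searrow 0} E(r)\le\epsilon_0$ is obtained, Theorem \ref{mainthm} gives the regularity of $z_0$ immediately.

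The iteration would be built from three ingredients at nested parabolic cylinders $Q(z_0,\theta\rho)\subset Q(z_0,\rho)$. \emph{Energy estimate.} Testing the local energy inequality \eqref{energy} against a cutoff $\psi$ supported in $Q(z_0,\rho)$ and equal to $1$ on $Q(z_0,\theta\rho)$, and subtracting the pressure mean in the convective term, one expects a four-dimensional bound of the schematic form
\[
A(\theta\rho)+E(\theta\rho)\;\lesssim\;\theta^{-\alpha}\bigl[C(\rho)+C(\rho)^{1/3}D(\rho)^{2/3}+F(\rho)+G(\rho)\bigr],
\]
where $A(r):=r^{-2}\esssup_{t}\int_{B_r(x_0)}|u|^{2}$. \emph{Interpolation.} In four dimensions the critical Sobolev/Ladyzhenskaya inequality $W^{1,2}\hookrightarrow L^{4}$ interpolated with $L^{2}$ yields $C(r)\lesssim A(r)^{1/2}E(r)$. \emph{Pressure decomposition.} Splitting $p$ on $B(x_0,\rho)$ through the elliptic equation $-\Delta p=\partial_i\partial_j(u^{i}u^{j})-\Div f$ into a locally harmonic part, which decays on $B(x_0,\theta\rho)$ as $\theta^{\gamma}D(\rho)$ by interior estimates, plus Newtonian-potential pieces controlled by $C(\rho)$ and $F(\rho)$, gives $D(\theta\rho)\lesssim\theta^{\gamma}D(\rho)+\theta^{-\alpha}(C(\rho)+F(\rho))$.

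Combining these, set $\Phi:=A+E+D$; substituting the interpolation into the energy and pressure estimates produces an inequality of the form
\[
\Phi(\theta\rho)\;\le\;\bigl(C_1\theta^{\gamma}+C_2\theta^{-\alpha}\Phi(\rho)^{1/2}\bigr)\Phi(\rho)+C_3\theta^{-\alpha}\bigl(F(\rho)+G(\rho)\bigr).
\]
Fixing $\theta$ so that $C_1\theta^{\gamma}\le\tfrac14$ and then $\epsilon_0$ so that $C_2\theta^{-\alpha}\epsilon_0^{1/2}\le\tfrac14$ yields the contraction $\Phi(\theta\rho)\le\tfrac12\Phi(\rho)+C\theta^{-\alpha}(F(\rho)+G(\rho))$. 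Iterating on the geometric sequence $\rho_k=\theta^{k}\rho_0$ and exploiting the decay of $F(\rho)+G(\rho)$ as $\rho\to 0$ (which follows from $f\in L_{6,\mathrm{loc}}$) shows $\Phi(\rho_k)\to 0$. In particular $E(\rho_k)\to 0$, so $\limsup_{r\searrow 0}E(r)=0$ and Theorem \ref{mainthm} applies.

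The most delicate step is the interpolation: four dimensions is exactly the critical Sobolev dimension for the Navier--Stokes energy space, so the reduction of $C$ to $A^{1/2}E$ is linear in $E$, with no scaling gain. This is precisely the failure mode highlighted in the introduction for the compactness arguments of \cite{CKN_82,Lin_98,Lady_99}. The contraction above only closes because the pressure estimate supplies an independent $\theta^{\gamma}$ gain and because the interpolation, although linear in $E$, is \emph{super-linear} in $\Phi^{1/2}$; the price is careful tracking of every power of $\theta$ and $\epsilon_0$, in the spirit of the Campanato-type bootstrap used in \cite{Dong_12} for the six-dimensional stationary problem.
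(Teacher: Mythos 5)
The proposal is circular in the paper's logical structure. In the paper, Theorem~\ref{mainthm} is \emph{deduced} from Theorem~\ref{th2} via Proposition~\ref{prop1} (see the final paragraph of Section~\ref{s2}); there is no independent proof that $\limsup_{r\searrow 0}E(r)\le\epsilon_0$ implies regularity in four dimensions. You propose to prove Theorem~\ref{th2} by first establishing $\limsup_{r\searrow 0}E(r)$ small and then ``applying Theorem~\ref{mainthm} immediately.'' That invocation is exactly what the paper needs Theorem~\ref{th2} in order to justify, so your reduction does not close.

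Beyond the circularity, two further gaps would remain even if one granted the $E$-criterion. First, you treat $F(\rho)$ as a source term that ``decays as $\rho\to0$, which follows from $f\in L_{6,\mathrm{loc}}$.'' That is true of $G(\rho)$, but not of $F$: $F$ is a scale-invariant pressure quantity with no a priori decay. In the paper's iteration (Lemma~\ref{9i}), $F$ appears on both sides, with its own self-improving estimate \eqref{F_est} carrying the good exponent $\gamma^{(3-\alpha)/(1+\alpha)}$; the iterated quantity must be $A+E+F$, not $A+E+D$. Second, and most importantly, the hard part of the paper lies after the decay $A(\rho)+E(\rho)+C(\rho)+D(\rho)+F(\rho)\le N\rho^{\alpha_0}$ is obtained. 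Because the Morrey exponent needed for H\"older continuity in $\mathbb{R}^4$ is $4+\varepsilon$ but the self-improving bootstrap of \eqref{esti} saturates at $E(\rho)\lesssim\rho^{5/3}$ (i.e.\ $\int_{Q_\rho}|\nabla u|^2\lesssim\rho^{2+5/3}$), the paper must invoke the decomposition $u=v+w$ into a caloric part and a remainder solving a forced heat equation, the $L_{3/2}$ parabolic estimate, the Poincar\'e-type inequality of Lemma~\ref{lem11.31} for $\mathcal{H}^1_{3/2}$ solutions of divergence-form parabolic equations, and finally Campanato's characterization of H\"older continuity. None of this appears in your proposal; you gesture at ``the Campanato-type bootstrap of \cite{Dong_12}'' but hide it inside the black box of Theorem~\ref{mainthm}, which is precisely what is not available.

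What you do get right: the three structural ingredients (energy estimate, 4D interpolation giving $C\lesssim A^{1/2}E + A^{3/2}$, pressure decomposition into a locally harmonic piece plus Newtonian potential), and the observation that the nonlinearity in the iteration is only ``superlinear in $\Phi^{1/2}$'' so the contraction closes only by combining a $\theta^{\gamma}$ gain from harmonicity with smallness of $\Phi$. These correctly reproduce the paper's Step~1 and Step~2. But they deliver at best the weak decay $\rho^{\alpha_0}$; they do not deliver regularity.
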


\begin{theorem}
\label{th3}
Let $\Omega$ be a open set in $\mathbb{R}^4$, $f \in L_{6,\text{loc}}(Q_T)$. Let $(u,p)$ be a suitable weak solution of \eqref{ns}-\eqref{inc} in $Q_T$. Then the 2D Hausdorff measure of the set of singular points in $Q_T$ is equal to zero.
\end{theorem}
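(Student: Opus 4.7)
The plan is to derive Theorem \ref{th3} from Theorem \ref{mainthm} via a standard Vitali-type covering argument, in the spirit of the corresponding three-dimensional deduction in \cite{CKN_82}. Let $\Sigma \subset Q_T$ denote the set of singular points. By Theorem \ref{mainthm}, every $z \in \Sigma$ must satisfy $\limsup_{r \searrow 0} E(r) > \epsilon_0$, where $E$ is the natural scaling-invariant local energy quantity of the form $E(r) = r^{-2} \int_{Q(z, r)} |\nabla u|^2 \, dx\, dt$, normalized so that its exponent matches the two-dimensional parabolic Hausdorff measure in four spatial dimensions. The fact that this exponent is $2$ (rather than $1$ as in the 3D case) is precisely what makes the singular set negligible for the $2$D Hausdorff measure and not just the $1$D one.

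First I would fix $\delta > 0$ and, for each $z \in \Sigma$, use the strict inequality above to pick a parabolic radius $r_z \in (0, \delta)$ satisfying
\[
\int_{Q(z, r_z)} |\nabla u|^2 \, dx\, dt > \frac{\epsilon_0}{2}\, r_z^2.
\]
Applying a Vitali covering lemma in the parabolic quasi-metric to the family $\{Q(z, r_z) : z \in \Sigma\}$, I extract a countable disjoint subfamily $\{Q(z_i, r_i)\}$ such that $\{Q(z_i, 5 r_i)\}$ still covers $\Sigma$. Summation then yields
\[
\sum_i (5 r_i)^2 \le \frac{50}{\epsilon_0}\sum_i \int_{Q(z_i, r_i)} |\nabla u|^2 \, dx\, dt \le \frac{50}{\epsilon_0} \int_{V_\delta} |\nabla u|^2 \, dx\, dt,
\]
where $V_\delta \subset Q_T$ is the (open) $\delta$-neighborhood of $\Sigma$ with respect to the parabolic distance.

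Next I would let $\delta \to 0$. A preparatory observation is that $\Sigma$ has space-time Lebesgue measure zero: by Lebesgue differentiation applied to $|\nabla u|^2 \in L_1(Q_T)$, we have $\lim_{r \to 0} E(r) = 0$ at a.e.\ point of $Q_T$, and Theorem \ref{mainthm} excludes all such points from $\Sigma$. Hence $|V_\delta| \to 0$, and by absolute continuity of the integral the right-hand side of the display above tends to zero. Since the left-hand side is (a constant times) the defining sum for the parabolic $2$-dimensional Hausdorff premeasure of $\Sigma$ at scale $5\delta$, we obtain that this premeasure vanishes in the limit, and therefore the $2$D parabolic Hausdorff measure of $\Sigma$ is zero, which is the assertion of the theorem.

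The genuine work lies inside Theorem \ref{mainthm}; once that $\epsilon$-regularity statement is in hand, the Hausdorff bound follows by a routine covering argument. The minor points requiring care are: (i) confirming that the quantity $E(r)$ used in Theorem \ref{mainthm} indeed scales as $r^{-2} \int |\nabla u|^2$ so that the exponent matches the $2$D Hausdorff measure; (ii) invoking a Vitali-type covering lemma for parabolic cylinders, which is standard since $Q(z, r)$ plays the role of a ball in the parabolic quasi-metric; and (iii) the measure-theoretic passage from a bound on $\sum r_i^2$ to the actual Hausdorff measure, which rests only on absolute continuity of $\int |\nabla u|^2$ along a shrinking neighborhood of a Lebesgue-null set.
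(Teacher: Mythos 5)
Your proof is correct and is exactly what the paper intends: the paper deduces Theorem \ref{th3} in a single sentence by citing the ``standard argument in geometric measure theory'' from \cite{CKN_82}, which is precisely the Vitali covering deduction you have written out, using that a singular point must satisfy $\limsup_{r\searrow 0}E(r)>\epsilon_0$ and that the exponent $2$ in $E(r)=r^{-2}\int_{Q(z,r)}|\nabla u|^2$ is what produces the $2$D parabolic Hausdorff bound. The one detail worth tightening is that the backward cylinders $Q(z,r)=B(x,r)\times(t-r^2,t)$ have their centre on the top time-face and so do not directly satisfy the Vitali $5r$-engulfing property in the time variable; the standard fix, as in \cite{CKN_82}, is to run both the covering lemma and the Lebesgue-differentiation step with the centred cylinders $B(x,r)\times(t-r^2,t+r^2)$, which are genuine parabolic quasi-metric balls and for which $\int_{Q(z,r)}|\nabla u|^2$ is still a lower bound --- exactly one of the ``minor points requiring care'' you already flag.
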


Compared to \cite{Dong_07}, as mentioned above Dong and Du concerned with local-in-time smooth solution to 4D Navier--Stokes equations with zero external force. Due to the lack of compactness, they used Schoen's trick in their proof. In our paper, we shall consider suitable weak solutions, and thus Schoen's trick is no longer applicable. Our proofs exploit the aforementioned idea in \cite{Dong_12} and use Campanato's approach. There are two main differences between our problem with the one in \cite{Dong_12}. The first one is that we do not have the same end-point Sobolev embedding inequality which was used in \cite{Dong_12}. To this end, we introduce an additional scale-invariant quantity $F$, which is a mixed space-time norm of the pressure $p$, and use an interpolation inequality.  As a consequence, we cannot archive the same optimal decay rate as in \cite{Dong_12}. Nevertheless, it turns out that the decay rate, although not optimal, still suffices for our purpose in the subsequent step. The other difference is that, as our problem is time-dependent, we cannot use the elliptic regularity theory to improve the decay rate in the final step as in \cite{Dong_12}. Naturally, we appeal to the parabolic regularity theory instead as well as a Poincar\'e type inequality for solutions to divergence form parabolic equations.

We remark that by using the same method we can get an alternative proof of  Caffarelli--Kohn--Nirenberg's theorem for the 3D Navier--Stokes equations without using any compactness argument. It remains an interesting open problem whether a similar result can be obtained for five or higher dimensional non-stationary Navier--Stokes equations. It seems to
us that four is the highest dimension to which our approach (or any
existing approach) applies. In fact, by the imbedding theorem, we have
$$
L_{\infty}((0,T);L_2(\Omega))\cap L_2((0,T);W_2^1(\Omega))\hookrightarrow L_{2(d+2)/d}((0,T)\times \Omega),
$$
which implies nonlinear term in the energy inequality cannot
be controlled by the energy norm alone when $d \ge 5$.

We organize this paper as follows: In Section \ref{s1}, we introduce the notions of some scaling invariant quantities and essential settings which would be used throughout the paper. In Section \ref{s2}, we prove our results in three steps. In the first step, we give some estimates of the scaling invariant quantities, which essentially follows the argument in \cite{Dong_07}. In the second step, we establish a weak decay estimate of certain scaling invariant quantities based on the estimate we proved in the first step by using an iteration metho. In the last step, we improve the decay estimate by a bootstrap argument, and apply parabolic regularity to get a good estimate of the $L_{3/2}$-mean oscillations of $u$, which yields the H\"{o}lder continuity of $u$ according to Campanato's characterization of H\"older continuous functions.

\mysection{Notation and Settings}\label{s1}

In this section, we will introduce the notation which would be used throughout the article. Let $\Omega$ be a domain in some finite-dimensional space. Denote $L_{p}(\Omega;\mathbb{R}^n)$ and $W_p^k(\Omega;\mathbb{R}^n)$ to be the usual Lebesgue and Sobolev spaces of functions from $\Omega$ into $\mathbb{R}^n$. Let $p\in (1,\infty)$ and $-\infty\le S<T\le \infty$. We denote $\cH_p^1$ to be the solution spaces for divergence form parabolic equations. Precisely,
$$
\cH^1_p(\Omega\times (S,T) )=
\{u: u,Du \in L_p(\Omega\times (S,T) ),\,u_t \in \bH^{-1}_p(\Omega\times (S,T) \},
$$
where $\bH^{-1}_p(\Omega\times (S,T) )$ is the space consisting of all generalized functions $v$ satisfying
$$
\inf \big\{\|f\|_{L_p(\Omega\times (S,T)) }+\|g\|_{L_p(\Omega\times (S,T)) }\,|\,v=\nabla\cdot g+f\big\}<\infty.
$$

We shall use the following notation of spheres, balls, parabolic cylinders, and parabolic boundary
\begin{align*}
&S(x_0,r)=\{x\in\mathbb{R}^4\,|\,|x-x_0|=r\},\ \ S(r)=S(0,r),\ \ S=S(1);\\
&B(x_0,r)=\{x\in\mathbb{R}^4\,|\,|x-x_0|<r\},\ \ B(r)=B(0,r),\ \ B=B(1);\\
&Q(z_0,r)=B(x_0,r) \times (t_0-r^2,t_0),\ \ Q(r)=Q(0,r),\ \ Q=Q(1);\\
&\partial Q(z_0,r) = (S(x_0,r) \times [t_0-r^2,t_0))\cup \{(t_0-r^2,y)\,|\, y \in B(x_0,r)\},
\end{align*}
where $z_0=(x_0,t_0)$.

We also denote mean values of summable functions as follows:
\begin{align*}
[u]_{x_0,r}(t)&=\dfrac{1}{|B(r)|}\int_{B(x_0,r)}u(x,t)\,dx,\\
(u)_{z_0,r}&=\dfrac{1}{|Q(r)|}\int_{Q(z_0,r)}u\, dz.
\end{align*}
Here $|A|$ as usual denotes the Lebesgue measure of the set $A$.

Now, we introduce the following quantities:
\begin{align*}
A(r)&=A(r,z_0)=\esssup_{t_0-r^2\leq t\leq t_0}\dfrac{1}{r^2}\int_{B(x_0,r)}|u(x,t)|^2\,dx,\\
E(r)&=E(r,z_0)=\dfrac{1}{r^2}\int_{Q(z_0,r)}|\nabla u|^2\,dz,\\
C(r)&=C(r,z_0)=\dfrac{1}{r^3}\int_{Q(z_0,r)}|u|^3\,dz,\\
D(r)&=D(r,z_0)=\dfrac{1}{r^3}\int_{Q(z_0,r)}|p-[p]_{x_0,r}|^{3/2}\,dz,\\
F(r)&=F(r,z_0)=\dfrac{1}{r^2}\Big[\int_{t_0-r^2}^{t_0}
\big(\int_{B(x_0,r)}|p-[p]_{x_0,r}|^{1+\alpha}\,dx\big)^{\frac{1}{2\alpha}}\,dt\Big]^{\frac{2\alpha}{1+\alpha}},\\
G(r)&=G(r,z_0)=r^4\Big[\int_{Q(z_0,r)}|f|^6\,dz\Big]^{\frac{1}{3}}.
\end{align*}
where $\alpha \in (0,1)$ is a number to be specified later. Notice that all these quantities are invariant under the natural scaling:
$$
u_\lambda(x,t)=\lambda u(\lambda x,\lambda^2 t), \,\,
p_\lambda(x,t)=\lambda^2 p(\lambda x,\lambda^2 t),\,\,
f_\lambda(x,t)=\lambda^3 f(\lambda x,\lambda^2 t).
$$
We are going to estimate them in Section \ref{s2}. Note that the quantity $F$ is auxiliary and will only be used in the first two steps of the proof in order to give a weak decay estimate of other quantities.

We finish this short section by introducing a pressure decomposition which would play a important role in our proof. Let $\eta(x)$ be a smooth function on $\mathbb{R}^4$ supported in the unit ball $B(1)$, $0\leq \eta \leq 1$ and $\eta\equiv 1$ on $\bar{B}(2/3)$. Let $z_0$ be a given point in $Q_T$ and $r>0$ a real number such that $Q(z_0,r) \subset Q_T$. It's known that for a.e. $t \in (t_0-r^2,t_0)$, in the sense of distribution, one has
\begin{align*}
\Delta p &= -\dfrac{\partial^2}{\partial x_i \partial x_j}(u_iu_j)+\nabla \cdot f\\
&=-\dfrac{\partial^2}{\partial x_i \partial x_j}
\big((u_i-[u_i]_{x_0,r/2})(u_j-[u_j]_{x_0,r/2})\big)+\nabla \cdot f \ \ \text{in} \ \ B(x_0,r).
\end{align*}
This will hold for a weak solution to \eqref{ns}-\eqref{inc}. For these $t$, we consider the decomposition
\begin{equation}
\label{deco}
p=\tilde{p}_{x_0,r}+h_{x_0,r} \ \ in \ \ B(x_0,r),
\end{equation}
 where $\tilde{p}_{x_0,r}$ is the Newtonian potential of
\begin{equation*}
\dfrac{\partial^2}{\partial x_i \partial x_j}\big((u_i-[u_i]_{x_0,r/2})(u_j-[u_j]_{x_0,r/2})\eta(2(x-x_0)/r)\big)+\nabla \cdot \big(f\eta(2(x-x_0)/r)\big).
\end{equation*}
Then $h_{x_0,r}$ is harmonic in $B(x_0,r/3)$.

\mysection{The proof}\label{s2}
In our proof of the results, we will make use of the following well-known interpolation inequality.
\begin{lemma}
\label{interpolation}
For any function $u \in W_2^1(\mathbb{R}^4)$  and real numbers $q\in [2,4]$ and $r >0$,
\begin{multline*}
\int_{B_r}|u|^q \,dx \leq N(q)\Big[\big(\int_{B_r}|\nabla u|^2 \, dx\big)^{q-2}\big(\int_{B_r}|u|^2 \, dx\big)^{2-q/2}\\
+r^{-2(q-2)}\big(\int_{B_r}|u|^2\,dx\big)^{q/2}\Big].
\end{multline*}
\end{lemma}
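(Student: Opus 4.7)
The plan is to reduce the claim, via scaling, to the case $r=1$ and then invoke the standard Gagliardo--Nirenberg interpolation on a ball. First I would set $v(y)=u(ry)$ on $B_1$, rewrite each term of the inequality in terms of $v$, and check that $\int_{B_r}|u|^q\,dx = r^4\int_{B_1}|v|^q\,dy$, $\int_{B_r}|\nabla u|^2\,dx = r^2\int_{B_1}|\nabla v|^2\,dy$, and $\int_{B_r}|u|^2\,dx = r^4\int_{B_1}|v|^2\,dy$. A quick check of exponents then shows that both terms on the right-hand side scale like $r^4$, matching the left-hand side, so the general case follows from the case $r=1$.

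For $r=1$, I would use a bounded linear extension operator $E\colon W^1_2(B_1)\to W^1_2(\mathbb{R}^4)$ satisfying
\[
\|\nabla Ev\|_{L_2(\mathbb{R}^4)}+\|Ev\|_{L_2(\mathbb{R}^4)}\le C\bigl(\|\nabla v\|_{L_2(B_1)}+\|v\|_{L_2(B_1)}\bigr),
\]
and apply the Gagliardo--Nirenberg inequality on $\mathbb{R}^4$,
\[
\|Ev\|_{L_q(\mathbb{R}^4)}\le C\,\|\nabla Ev\|_{L_2(\mathbb{R}^4)}^{\theta}\,\|Ev\|_{L_2(\mathbb{R}^4)}^{1-\theta},\qquad \theta=2-\tfrac{4}{q},
\]
which for $q\in[2,4]$ interpolates between the trivial $L_2$ bound and the Sobolev embedding $W^{1,2}(\mathbb{R}^4)\hookrightarrow L_4(\mathbb{R}^4)$.

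Raising to the $q$-th power and using $q\theta=2q-4$ and $q(1-\theta)=4-q$, I would expand $(a+b)^{2q-4}\lesssim a^{2q-4}+b^{2q-4}$ to obtain
\[
\int_{B_1}|v|^q\,dy\le C\Bigl[\bigl(\textstyle\int_{B_1}|\nabla v|^2\,dy\bigr)^{q-2}\bigl(\textstyle\int_{B_1}|v|^2\,dy\bigr)^{2-q/2}+\bigl(\textstyle\int_{B_1}|v|^2\,dy\bigr)^{q/2}\Bigr],
\]
which is the desired inequality at $r=1$. Undoing the scaling then reinstates the factor $r^{-2(q-2)}$ in front of the second term, completing the proof.

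This is a textbook interpolation estimate and I do not anticipate a genuine obstacle. The only mildly delicate point is the endpoint $q=4$, where $\theta=1$ and the interpolation degenerates to the pure Sobolev embedding, so one must verify that the low-order $\|v\|_{L_2}$ contribution picked up by the extension is correctly absorbed into the second term of the right-hand side; this is exactly the purpose of the $r^{-2(q-2)}$ summand in the statement.
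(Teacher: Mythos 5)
The paper states this lemma as ``well-known'' and does not supply a proof, so there is no argument in the text to compare against; what follows is a review of your sketch on its own terms.

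The scaling reduction to $r=1$ is correct, and so is the Gagliardo--Nirenberg exponent $\theta=2-4/q$. The overall route (extend, apply GN on $\mathbb{R}^4$, localize) is standard and can be made to work, but as written there is a genuine gap in the final step. Your stated extension bound,
\[
\|\nabla Ev\|_{L_2(\mathbb{R}^4)}+\|Ev\|_{L_2(\mathbb{R}^4)}\le C\bigl(\|\nabla v\|_{L_2(B_1)}+\|v\|_{L_2(B_1)}\bigr),
\]
only gives $\|Ev\|_{L_2(\mathbb{R}^4)}\le C(a+b)$ with $a=\|\nabla v\|_{L_2(B_1)}$, $b=\|v\|_{L_2(B_1)}$. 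Feeding that into GN raised to the $q$-th power yields
\[
\int_{B_1}|v|^q\le C(a+b)^{2q-4}(a+b)^{4-q}=C(a+b)^q\lesssim a^q+b^q,
\]
whose first term is $\bigl(\int_{B_1}|\nabla v|^2\bigr)^{q/2}$, \emph{not} the cross term $\bigl(\int_{B_1}|\nabla v|^2\bigr)^{q-2}\bigl(\int_{B_1}|v|^2\bigr)^{2-q/2}$. For $q\in(2,4)$, taking $b$ fixed and $a\to\infty$ shows $a^q$ is not controlled by $a^{2q-4}b^{4-q}+b^q$, so expanding only $(a+b)^{2q-4}$, as you propose, does not follow from the bound you wrote: you are implicitly assuming the separate estimate $\|Ev\|_{L_2(\mathbb{R}^4)}\le C\,\|v\|_{L_2(B_1)}$.

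The fix is minor but should be made explicit. Either (i) use an extension that is bounded on $L_2$ alone --- a reflection-type or Stein extension for the ball satisfies $\|Ev\|_{L_2(\mathbb{R}^4)}\le C\|v\|_{L_2(B_1)}$ \emph{and} $\|\nabla Ev\|_{L_2(\mathbb{R}^4)}\le C\bigl(\|\nabla v\|_{L_2(B_1)}+\|v\|_{L_2(B_1)}\bigr)$ --- after which $(a+b)^{2q-4}b^{4-q}\lesssim a^{2q-4}b^{4-q}+b^q$ gives exactly the claimed bound; or (ii) dispense with the extension entirely: split $v=(v-\bar v)+\bar v$, apply the Sobolev--Poincar\'e inequality $\|v-\bar v\|_{L_4(B_1)}\le C\|\nabla v\|_{L_2(B_1)}$ interpolated against $\|v-\bar v\|_{L_2(B_1)}\le\|v\|_{L_2(B_1)}$ to get the cross term, and bound $|\bar v|^q|B_1|\lesssim\bigl(\int_{B_1}|v|^2\bigr)^{q/2}$ by Cauchy--Schwarz, which is precisely the low-order summand that picks up the $r^{-2(q-2)}$ after undoing the scaling. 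Route (ii) is arguably cleaner and makes your closing observation about the role of the second term on the right-hand side completely transparent.
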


Let $\cL:=\partial_t-\partial_{x_i}(a_{ij}\partial_{x_j})$ be a (possibly degenerate) divergence form parabolic operator with measurable coefficients which are bounded by a constant $K>0$. We will use the following Poincar\'e type inequality for solutions to parabolic equations. See, for instance, \cite[Lemma 3.1]{Krylov_05}.
\begin{lemma}
                    \label{lem11.31}
Let $z_0\in \bR^{d+1}$, $p\in (1,\infty)$, $r\in (0,\infty)$, $u\in \cH^1_{p,\text{loc}}(\bR^{d+1})$, $g=(g_1,\ldots,g_d),f\in L_{p,\text{loc}}(\bR^{d+1})$. Suppose that $u$ is a weak solution to $\cL u=\nabla\cdot g+f$ in $Q(z_0,r)$. Then we have
$$
\int_{Q(z_0,r)}|u(t,x)-(u)_{z_0,R}|^p\,dz\le Nr^p
\int_{Q(z_0,r)}\big(|\nabla u|^p+|g|^p+r^p|f|^p\big)\,dz,
$$
where $N=N(d,K,p)$.
\end{lemma}

Now we prove the main theorems in three steps.
\subsection{Step 1.}First, we control the quantities $A,C,D,F$ in a smaller ball by their values in a larger ball under the assumption that $E$ is sufficiently small. Here we follow the arguments in \cite{Dong_07}, which in turn used some ideas in \cite{Lady_99, Lin_98}.

\begin{lemma}
Suppose $\gamma \in (0,1), \rho >0$  are constants and $Q(z_0,\rho) \subset Q_T$. Then we have
\begin{equation}
\label{C_est}
C(\gamma \rho) \leq N[\gamma^{-3}A^{1/2}(\rho)E(\rho)+\gamma^{-9/2}A^{3/4}(\rho)E^{3/4}(\rho)+\gamma C(\rho)],
\end{equation}
where $N$ is a constant independent of $\gamma,\rho$ and $z_0$.
\end{lemma}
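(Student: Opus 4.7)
The plan is to adapt the classical Caffarelli--Kohn--Nirenberg splitting to four dimensions. First I would decompose $u$ as the sum of its spatial mean on the large ball and the oscillation around it: set $\bar u(t) := [u]_{x_0,\rho}(t)$ and $v(x,t) := u(x,t) - \bar u(t)$, so that $\int_{B(x_0,\rho)} v(x,t)\,dx = 0$ for a.e.\ $t$. By the convexity of $|\cdot|^3$ one has $|u|^3 \leq C(|v|^3 + |\bar u|^3)$ pointwise, reducing the problem to estimating the two pieces separately on $Q(z_0,\gamma\rho)$.

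For the spatially constant part $\bar u$, Jensen's inequality gives $|\bar u(t)|^3 \leq |B(\rho)|^{-1}\int_{B(x_0,\rho)} |u(x,t)|^3\,dx$. Multiplying by $|B(\gamma\rho)|$ and integrating in $t$ over $(t_0-(\gamma\rho)^2, t_0) \subset (t_0-\rho^2,t_0)$ gives a bound by $(|B(\gamma\rho)|/|B(\rho)|)\int_{Q(z_0,\rho)}|u|^3\,dz \lesssim \gamma^4 \rho^3 C(\rho)$. Normalizing by $(\gamma\rho)^3$ produces exactly the $\gamma C(\rho)$ term, the gain of one factor of $\gamma$ coming from the volume ratio $\gamma^4$ against the natural scaling $(\gamma\rho)^{-3}$ of $C$.

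For the oscillation $v$, I would apply Lemma~3.1 with $q=4$ on the large ball, invoking Poincar\'e's inequality on $B(x_0,\rho)$ (valid since $v$ has zero mean there) to absorb the low-order term, obtaining $\|v(t)\|_{L^4(B(\rho))}^2 \leq C g(t)$ with $g(t) := \int_{B(x_0,\rho)}|\nabla u|^2\,dx$. Then I would factor by spatial H\"older as $\int_{B(\gamma\rho)} |v|^3\,dx \leq \|v\|_{L^2(B(\gamma\rho))}\|v\|_{L^4(B(\gamma\rho))}^2$, controlling the $L^4$ factor by its value on the larger ball and estimating the $L^2$ factor in two complementary ways: the crude bound $\|v\|_{L^2(B(\gamma\rho))} \leq \|u\|_{L^2(B(\rho))} \leq \rho\, A(\rho)^{1/2}$, and the sharper bound $\|v\|_{L^2(B(\gamma\rho))} \leq |B(\gamma\rho)|^{1/4}\|v\|_{L^4(B(\gamma\rho))} \leq C(\gamma\rho)\,g(t)^{1/2}$ from H\"older in space combined with the Sobolev--Poincar\'e estimate above. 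Integrating in time over $(t_0-(\gamma\rho)^2,t_0)$ against $\int g\,dt \leq \rho^2 E(\rho)$ and interpolating between the two $L^2$-estimates via H\"older's inequality in time yields the $\gamma^{-3} A^{1/2} E$ term from the crude bound and the $\gamma^{-9/2} A^{3/4} E^{3/4}$ term from the balanced mixture.

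The key obstacle is that four is the endpoint dimension for the Sobolev embedding $W^{1,2}\hookrightarrow L^4$, so there is no scale-invariant Gagliardo--Nirenberg interpolation strictly between $L^2$ and $L^4$ that would directly produce an $A^{3/4}E^{3/4}$ bound in the clean way available in three dimensions. Consequently, one cannot replicate the 3D CKN calculation and must instead interpolate between the two $L^2$-estimates for $v$ on the smaller ball and carefully balance the exponents in the time-H\"older step; this balancing (together with the shrinking time interval $(\gamma\rho)^2$) is what forces the particular negative power $\gamma^{-9/2}$ to appear alongside $A^{3/4}E^{3/4}$.
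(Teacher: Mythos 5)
Your overall strategy is the right one and essentially the standard one (the paper just cites Dong--Du 2007 for this lemma): decompose $u = \bar u + v$ with $\bar u(t)=[u]_{x_0,\rho}(t)$, handle the constant-in-space part by Jensen to get $\gamma C(\rho)$ from the volume ratio, and handle $v$ via Lemma~\ref{interpolation} plus the Poincar\'e inequality on $B(x_0,\rho)$ (where $v$ has zero mean). The first two-thirds of your argument are correct and in fact already finish the job: the ``crude'' bound $\|v(t)\|_{L^2(B(\gamma\rho))}\le\rho A^{1/2}(\rho)$ together with $\|v(t)\|_{L^4(B(\rho))}^2\le N g(t)$ (where $g(t)=\int_{B(x_0,\rho)}|\nabla u|^2\,dx$) gives, after integrating in time over $(t_0-(\gamma\rho)^2,t_0)$ and dividing by $(\gamma\rho)^3$, the inequality $C(\gamma\rho)\le N\bigl[\gamma^{-3}A^{1/2}(\rho)E(\rho)+\gamma C(\rho)\bigr]$. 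Since the middle term $\gamma^{-9/2}A^{3/4}E^{3/4}$ in \eqref{C_est} is nonnegative, this stronger estimate already implies the lemma.

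The one step that does not go through as you wrote it is the ``balanced mixture'': if you first absorb the low-order Gagliardo--Nirenberg term by Poincar\'e at scale $\rho$ to get $\|v\|_{L^4(B(\rho))}^2\le Ng(t)$, then interpolate the two $L^2$ bounds $\|v\|_{L^2(B(\gamma\rho))}\lesssim\min\{\rho A^{1/2},\gamma\rho\,g^{1/2}\}$ with any weight $\theta\in(0,1]$, you end up with a factor $g(t)^{1+\theta/2}$ in the integrand, and $\int_{t_0-(\gamma\rho)^2}^{t_0}g^{1+\theta/2}\,dt$ cannot be controlled by powers of $E(\rho)$ and the time length (H\"older goes the wrong way when the exponent exceeds $1$, and no $L^\infty_t$ bound on $g$ is available). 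The term $\gamma^{-9/2}A^{3/4}(\rho)E^{3/4}(\rho)$ in fact arises by applying Lemma~\ref{interpolation} with $q=3$ (or $q=4$) directly at radius $\gamma\rho$, so that the low-order term carries the explicit factor $(\gamma\rho)^{-2}$; one then bounds $\int_{B(\gamma\rho)}|v|^2\le\bigl(\rho^2A(\rho)\bigr)^{1/2}\bigl(C\rho^2g(t)\bigr)^{1/2}$ by the geometric mean of the two estimates, which produces $g^{3/4}$ — an exponent below $1$ — so H\"older in time over the interval of length $(\gamma\rho)^2$ is applicable and yields exactly $\gamma^{-9/2}A^{3/4}E^{3/4}$. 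This is a cosmetic issue here since, as noted, the estimate you do obtain is stronger; but it is worth knowing why the obvious ``interpolate then H\"older in time'' route stalls, and where the $\gamma^{-9/2}$ really comes from.

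Your final paragraph correctly identifies the 4D endpoint issue with $W^{1,2}\hookrightarrow L^4$ as the reason the 3D CKN bookkeeping does not carry over verbatim.
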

%
%
%
The proof can be found in \cite{Dong_07}.

\begin{lemma}
\label{D_est_lemma}
Suppose $\alpha \in [1/11,1/2], \gamma \in (0,1/8],\rho >0$ are constants and $Q(z_0,\rho) \subset Q_T$. Then for any $z_1 \in Q(z_0,\rho/8)$ we have
\begin{equation}
\label{F_est}
F(\gamma \rho,z_1) \leq N(\alpha)[\gamma^{-2}A^{\frac{1-\alpha}{1+\alpha}}(\dfrac{\rho}{2})E^{\frac{2\alpha}{1+\alpha}}(\dfrac{\rho}{2})+\gamma^{\frac{3-\alpha}{1+\alpha}}F(\rho)+\gamma^{-2} G^{1/2}(\rho)],
\end{equation}
where $N(\alpha)$ is a constant independent of $\gamma,\rho$ and $z_0$. In particular, for $\alpha=1/2$ we have
\begin{equation}
\label{D_est}
D(\gamma \rho,z_1) \leq N[\gamma^{-3}A^{1/2}(\dfrac{\rho}{2})E(\dfrac{\rho}{2})
+\gamma^{5/2}D(\rho)+\gamma^{-3}G^{3/4}(\rho)].
\end{equation}
Moreover, it holds that
\begin{equation}
\label{D_est_2}
D(\gamma \rho,z_1) \leq N(\alpha)[\gamma^{-3}(A(\dfrac{\rho}{2})+E(\dfrac{\rho}{2}))^{3/2}
+\gamma^{(9-3\alpha)/(2+2\alpha)}F^{3/2}(\rho)+\gamma^{-3}G^{3/4}(\rho)].
\end{equation}
\end{lemma}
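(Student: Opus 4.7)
The plan is to decompose the pressure via \eqref{deco} at scale $\rho$ around $z_0$, writing $p=\tilde p_{x_0,\rho}+h_{x_0,\rho}$ with $h_{x_0,\rho}$ harmonic in $B(x_0,\rho/3)$. Since $z_1\in Q(z_0,\rho/8)$ and $\gamma\leq 1/8$, we have $B(x_1,\gamma\rho)\subset B(x_0,\rho/4)\subset B(x_0,\rho/3)$, so $h_{x_0,\rho}$ is harmonic on the target ball. Writing $p-[p]_{x_1,\gamma\rho}=(\tilde p-[\tilde p]_{x_1,\gamma\rho})+(h-[h]_{x_1,\gamma\rho})$, I would estimate the two pieces separately, further splitting $\tilde p=\tilde p_1+\tilde p_2$ along the two sources in \eqref{deco}.

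For $\tilde p_1$ (quadratic in $u$), the Calder\'on--Zygmund theorem pointwise in $t$ gives $\|\tilde p_1(\cdot,t)\|_{L^{1+\alpha}(\mathbb{R}^4)}\leq N\|(u-[u]_{x_0,\rho/2})^2\eta\|_{L^{1+\alpha}}$. Applying Lemma \ref{interpolation} with $q=2(1+\alpha)$ to the zero-mean function $u-[u]_{x_0,\rho/2}$ on $B(x_0,\rho/2)$ (and using Poincar\'e to absorb the non-scaling term) yields
\begin{equation*}
\int_{B(x_0,\rho/2)}|u-[u]_{x_0,\rho/2}|^{2(1+\alpha)}\,dx\leq N\rho^{2(1-\alpha)}A(\rho/2)^{1-\alpha}\Bigl(\int_{B(x_0,\rho/2)}|\nabla u|^2\,dx\Bigr)^{2\alpha}.
\end{equation*}
Raising to $1/(2\alpha)$, integrating in $t$ over $(t_0-\rho^2,t_0)$, raising to $2\alpha/(1+\alpha)$, and dividing by $(\gamma\rho)^2$ collapses all $\rho$-powers and leaves exactly $\gamma^{-2}A^{(1-\alpha)/(1+\alpha)}E^{2\alpha/(1+\alpha)}$. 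For $\tilde p_2$ (from $\nabla\cdot f$), treat it as a Riesz potential of order $1$ applied to $f\eta$: Hardy--Littlewood--Sobolev combined with spatial H\"older (reducing to $\|f\|_{L^6}$) and temporal H\"older (valid since $(1+\alpha)/(2\alpha)\leq 6$, which is exactly $\alpha\geq 1/11$) produces the $\gamma^{-2}G(\rho)^{1/2}$ contribution.

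For the $h$-piece, use that $\nabla h$ is harmonic in $B(x_0,\rho/3)$, so standard interior estimates give
\begin{equation*}
\sup_{B(x_0,\rho/4)}|\nabla h|^{1+\alpha}\leq N\rho^{-(5+\alpha)}\int_{B(x_0,\rho/3)}|h-c|^{1+\alpha}\,dx
\end{equation*}
for any constant $c$. Choosing $c=[p]_{x_0,\rho}$ and writing $h-c=(p-[p]_{x_0,\rho})-\tilde p$ converts the right-hand side into a bound by $F(\rho)$ plus the $\tilde p$-quantities already handled above. Chaining $\|h-[h]_{x_1,\gamma\rho}\|_{L^\infty(B(x_1,\gamma\rho))}\leq N\gamma\rho\|\nabla h\|_{L^\infty}$ with the volume bound $\|g\|_{L^{1+\alpha}(B(x_1,\gamma\rho))}^{1+\alpha}\leq(\gamma\rho)^4\|g\|_{L^\infty}^{1+\alpha}$ gives an overall prefactor $\gamma^{5+\alpha}$; after raising to $1/(2\alpha)$, integrating in $t$, raising to $2\alpha/(1+\alpha)$, and dividing by $(\gamma\rho)^2$, the $\gamma$-exponent becomes $(5+\alpha)/(1+\alpha)-2=(3-\alpha)/(1+\alpha)$, yielding the middle term of \eqref{F_est}. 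The $\tilde p$-contamination from $h-c$ is absorbed into the direct $\tilde p_1,\tilde p_2$ estimates because $\gamma^{(3-\alpha)/(1+\alpha)}\leq\gamma^{-2}$.

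Inequality \eqref{D_est} is immediate from \eqref{F_est} by setting $\alpha=1/2$ and observing that $F(r,z)=D(r,z)^{2/3}$ in this case, so raising to the $3/2$-th power closes the argument. For \eqref{D_est_2}, first derive the scale-invariant bound $D(\gamma\rho,z_1)\leq NF(\gamma\rho,z_1)^{3/2}$ by applying H\"older in space ($1+\alpha\leq 3/2$) and H\"older in time with exponent $3\alpha/(1+\alpha)\leq 1$; both require $\alpha\leq 1/2$, and the $(\gamma\rho)$-factors cancel exactly. Then raising \eqref{F_est} to $3/2$ and using the arithmetic--geometric mean $A^{3(1-\alpha)/(2(1+\alpha))}E^{3\alpha/(1+\alpha)}\leq N(A+E)^{3/2}$, valid since the exponents sum to $3/2$, together with $3(3-\alpha)/(2(1+\alpha))=(9-3\alpha)/(2+2\alpha)$, delivers \eqref{D_est_2}. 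The most delicate step is the $\tilde p_2$ bookkeeping, where Hardy--Littlewood--Sobolev, spatial H\"older, and temporal H\"older must all align to land on exactly $\rho^4\|f\|_{L^6(Q)}=\rho^2G(\rho)^{1/2}$; this is precisely where the constraint $\alpha\in[1/11,1/2]$ becomes tight on both ends.
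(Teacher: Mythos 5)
Your derivation of \eqref{F_est} follows the same route as the paper: decompose $p=\tilde p_{x_0,\rho}+h_{x_0,\rho}$ at scale $\rho$ about $x_0$, control $\tilde p$ via Calder\'on--Zygmund plus Lemma \ref{interpolation} with $q=2(1+\alpha)$, control the harmonic piece by interior estimates for $\nabla h_{x_0,\rho}$ on $B(x_1,\gamma\rho)\subset B(x_0,\rho/3)$, and close in the mixed-time norm. You are somewhat vague on the $\alpha\in[1/11,1/3)$ regime: there the Sobolev/HLS exponent $\frac{4+4\alpha}{5+\alpha}$ drops below $1$, and the paper first upgrades by H\"older in space to $1+\beta$ with $\beta\in[1/3,1/2]$ before applying Sobolev; ``spatial H\"older reducing to $\|f\|_{L^6}$'' gestures at this but does not pin it down. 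Your argument for \eqref{D_est} via the identity $F=D^{2/3}$ at $\alpha=1/2$ is exactly the paper's.

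The derivation of \eqref{D_est_2}, however, contains a genuine error. You claim the scale-invariant bound $D(\gamma\rho,z_1)\le N\,F(\gamma\rho,z_1)^{3/2}$ ``by H\"older in space ($1+\alpha\le 3/2$)''. This is the wrong direction: H\"older on a bounded set controls the \emph{lower} Lebesgue exponent by the \emph{higher} one, so $L^{3/2}(B)$ is bounded by $L^{1+\alpha}(B)$ only when $1+\alpha\ge 3/2$, i.e.\ $\alpha\ge 1/2$. For $\alpha<1/2$ the claimed bound $D\le N F^{3/2}$ is simply false; one can see this by taking $p-[p]\sim r_0^{-\delta}\mathbf{1}_{B_{r_0}}$ with $r_0\to0$, for which $D/F^{3/2}\sim r_0^{4-6/(1+\alpha)}\to\infty$. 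The paper's actual argument avoids this: it estimates the $L^{3/2}(B(x_1,r))$ norm of the harmonic piece directly via $\int_{B(x_1,r)}|h-[h]|^{3/2}\le Nr^{11/2}\sup_{B(x_1,r)}|\nabla h|^{3/2}$, and then controls $\sup|\nabla h|$ by the $L^{1+\alpha}$ norm of $h$ over the \emph{larger} ball $B(x_0,\rho/3)$ using interior estimates. This passes from $L^\infty$ on the small ball to $L^{1+\alpha}$ on the large ball, which is unproblematic for any $\alpha$, and the only H\"older needed to reach $F(\rho)^{3/2}$ is in the \emph{time} variable, where the exponent comparison $\frac{3}{2(1+\alpha)}\le\frac{1}{2\alpha}$ does indeed hold for $\alpha\le 1/2$. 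So the direction constraint $\alpha\le 1/2$ enters via the time-H\"older step, not through a spatial comparison between $L^{3/2}$ and $L^{1+\alpha}$. You should redo \eqref{D_est_2} along these lines.
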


\begin{proof}
First, we assume $1/3 \leq \alpha \leq 1/2$. Denote $r=\gamma \rho$. Recall the decomposition of $p$ introduced in \eqref{deco} and the definition of $\eta$. By using the Calder\'{o}n--Zygmund estimate, Lemma \ref{interpolation} with $q=2(1+\alpha)$, the Poincar\'{e} inequality, and the Sobolev embedding inequality, one has
\begin{align}
\nonumber&\int_{B(x_0,r)}|\tilde p_{x_0,r}|^{1+\alpha}\, dx \\
\nonumber&\leq N \int_{B(x_0,r/2)}|u-[u]_{x_0,r/2}|^{2(1+\alpha)}\,dx
+N\int_{B(x_0,r)}|\Delta^{-1}\nabla \cdot (f \eta(2(x-x_0)/r))|^{1+\alpha}\, dx \\
\nonumber&\leq N \big(\int_{B(x_0,r/2)}|\nabla u|^2\, dx\big)^{2\alpha}\big(\int_{B(x_0,r/2)}|u-[u]_{x_0,r/2}|^2\, dx\big)^{1-\alpha}\\
\nonumber&\,\,+N r^{-4\alpha}\big(\int_{B(x_0,r/2)}|u-[u]_{x_0,r/2}|^2\,dx\big)^{1+\alpha}
+N\big(\int_{B(x_0,r/2)}|f|^{\frac{4+4\alpha}{5+\alpha}}\,dx\big)^{\frac{5+\alpha}{4}}\\
&\leq N \big(\int_{B(x_0,r/2)}|\nabla u|^2\, dx\big)^{2\alpha} \big(\int_{B(x_0,r/2)}|u|^2\,dx\big)^{1-\alpha}
+N\big(\int_{B(x_0,r/2)}|f|^{\frac{4+4\alpha}{5+\alpha}}\,dx\big)^{\frac{5+\alpha}{4}},
\label{pf0}
\end{align}
where $\frac{4+4\alpha}{5+\alpha}\ge 1$.
Here we also used the obvious inequality
\begin{equation*}
\int_{B(x_0,r/2)}|u-[u]_{x_0,r/2}|^2\,dx \leq \int_{B(x_0,r/2)}|u|^2 \,dx.
\end{equation*}
Similarly,
\begin{align}
\nonumber \int_{B(x_0,\rho)}|\tilde p_{x_0,\rho}|^{1+\alpha}\, dx \leq & N \big(\int_{B(x_0,\rho/2)}|\nabla u|^2 \,dx\big)^{2\alpha} \big(\int_{B(x_0,\rho/2)}|u|^2\,dx\big)^{1-\alpha}\\&
+N\big(\int_{B(x_0,\rho/2)}|f|^{\frac{4+4\alpha}{5+\alpha}}\,dx\big)^{\frac{5+\alpha}{4}}.
\label{pf1}
\end{align}

Since $h_{x_0,\rho}$ is harmonic in $B(x_0,\rho/3)$, any Sobolev norm of $h_{x_0,\rho}$ in a smaller ball can be estimated by its $L_p$ norm in $B(x_0,\rho/3)$ for any $p\in [1,\infty]$. Thus, by using the Poincar\'{e} inequality one can obtain, for a.e. $t$,
\begin{align}
\nonumber&\int_{B(x_1,r)}|h_{x_0,\rho}-[h_{x_0,\rho}]_{x_1,r}|^{1+\alpha}\,dx\\
\nonumber&\leq N r^{1+\alpha}\int_{B(x_1,r)}|\nabla h_{x_0,\rho}|^{1+\alpha}\,dx\\
\nonumber&\leq N r^{5+\alpha} \sup_{B(x_1,r)}|\nabla h_{x_0,\rho}|^{1+\alpha}\\
\nonumber&\leq N (\dfrac{r}{\rho})^{5+\alpha} \int_{B(x_0,\rho/3)}|h_{x_0,\rho}(x,t)-[p]_{x_0,\rho}|^{1+\alpha}\,dx\\
&\leq N(\dfrac{r}{\rho})^{5+\alpha} [\int_{B(x_0,\rho)}|p(x,t)-[p]_{x_0,\rho}|^{1+\alpha}+
|\tilde{p}_{x_0,\rho}(x,t)|^{1+\alpha}\, dx].
\label{pf2}
\end{align}


Since $\tilde{p}_{x_0,r}+h_{x_0,r}=p=\tilde{p}_{x_0,\rho}+h_{x_0,\rho}$ in $B(x_1,r)$,
from \eqref{pf0}, \eqref{pf1} and \eqref{pf2} we get, for a.e. $t$,
\begin{align}
\nonumber &\int_{B(x_1,r)}|p(x,t)-[p]_{x_1,r}|^{1+\alpha}\,dx \\\nonumber &\leq \int_{B(x_1,r)}|\tilde{p}_{x_0,\rho}-[\tilde{p}_{x_0,\rho}]_{x_1,r}|^{1+\alpha}\,dx+\int_{B(x_1,r)}
|h_{x_0,\rho}-[h_{x_0,\rho}]_{x_1,r}|^{1+\alpha}\,dx
\\\nonumber &\leq \int_{B(x_1,r)}|\tilde{p}_{x_0,\rho}|^{1+\alpha}\,dx+\int_{B(x_1,r)}
|h_{x_0,\rho}-[h_{x_0,\rho}]_{x_1,r}|^{1+\alpha}\,dx\\
\nonumber &\leq N \big(\int_{B(x_0,\rho/2)}|\nabla u|^2\, dx\big)^{2\alpha} \big(\int_{B(x_0,\rho/2)}|u|^2\,dx\big)^{1-\alpha}\\
&\ \ +N\big(\dfrac{r}{\rho}\big)^{5+\alpha} \int_{B(x_0,\rho)}|p(x,t)-[p]_{x_0,\rho}|^{1+\alpha}\, dx+N\big(\int_{B(x_0,\rho)}|f|^{\frac{4+4\alpha}{5+\alpha}}\,dx\big)^{\frac{5+\alpha}{4}}.\label{pf5}
\end{align}
Raising to power $\dfrac{1}{2\alpha}$, integrating with respect to $t$ in $(t_1-r^2,t_1)$, and using H\"older's inequality complete the proof of \eqref{F_est} and \eqref{D_est}.

Now in the case $1/11 \leq \alpha < 1/3$, we cannot use the Sobolev embedding inequality directly in \eqref{pf0} because $\frac{4+4\alpha}{5+\alpha}< 1$. However, since function $\eta$ has compact support, by using H\"{o}lder inequality, we can get
\begin{align*}
&N \int_{B(x_0,r)}|\Delta^{-1}\nabla \cdot (f \eta(2(x-x_0)/r))|^{1+\alpha}\,dx \\&\leq N r^{4(1-\frac{1+\alpha}{1+\beta})} \big(\int_{B(x_0,r)}|\Delta^{-1}\nabla \cdot (f \eta(2(x-x_0)/r))|^{1+\beta}\,dx\big)^{\frac{1+\alpha}{1+\beta}} \\&\leq
Nr^{4(1-\frac{1+\alpha}{1+\beta})}
\big(\int_{B(x_0,r/2)}|f|^{\frac{4+4\beta}{5+\beta}}\,dx\big)^{\frac{5+\beta}{4}\frac{1+\alpha}{1+\beta}},
\end{align*}
where $1/3 \leq \beta \leq 1/2$. Noting that $\frac{4+4\beta}{5+\beta}< 6$ and $\frac{1+\alpha}{2\alpha}\leq 6$, we then prove \eqref{F_est} in the same way as the case $1/3 \leq \alpha \leq 1/2$.

To prove \eqref{D_est_2}, we use a slightly different estimate from \eqref{pf2}. Again, since $h$ is harmonic in $B(x_0,\rho/3)$, we have
\begin{align*}
\nonumber&\int_{B(x_1,r)}|h_{x_0,\rho}-[h_{x_0,\rho}]_{x_1,r}|^{3/2}\,dx\\
\nonumber&\leq N r^{3/2}\int_{B(x_1,r)}|\nabla h_{x_0,\rho}|^{3/2}\,dx\\
\nonumber&\leq N r^{11/2} \sup_{B(x_1,r)}|\nabla h_{x_0,\rho}|^{3/2}\\
\nonumber&\leq N \dfrac{r^{11/2}}{{\rho}^{3/2+6/(1+\alpha)}} [\int_{B(x_0,\rho/3)}|h_{x_0,\rho}(x,t)-[p]_{x_0,\rho}|^{1+\alpha}]^{\frac{3}{2(1+\alpha)}}\\
\nonumber&\leq N\dfrac{r^{11/2}}{{\rho}^{3/2+6/(1+\alpha)}} \bigg\{[\int_{B(x_0,\rho)}|p(x,t)-[p]_{x_0,\rho}|^{1+\alpha}\,dx]^{\frac{3}{2(1+\alpha)}}\\
&\,\,+[\int_{B(x_0,\rho)}|\tilde{p}_{x_0,\rho}(x,t)|^{1+\alpha} \,dx]^{\frac{3}{2(1+\alpha)}}\bigg\}.
\end{align*}
Similar to \eqref{pf5}, we obtain, for a.e. $t$,
\begin{align*}
\nonumber&\int_{B(x_1,r)}|p(x,t)-[p]_{x_1,r}|^{3/2}\,dx\\
\nonumber&\leq N \big(\int_{B(x_0,\rho/2)}|\nabla u|^2 \,dx\big) \big(\int_{B(x_0,\rho/2)}|u|^2\,dx\big)^{1/2}\\
\nonumber&\ \ +N\dfrac{r^{11/2}}{{\rho}^{3/2+6/(1+\alpha)}} \bigg \{\big[\int_{B(x_0,\rho)}|p(x,t)-[p]_{x_0,\rho}|^{1+\alpha}\,dx\big]^{\frac{3}{2(1+\alpha)}}\\\nonumber &\ \ +\big(\int_{B(x_0,\rho)}|\nabla u(x,t)|^2 \,dx\big)^{\frac{3\alpha}{1+\alpha}}\big(\int_{B(x_0,\rho)}|u(x,t)|^2 \,dx\big)^{\frac{3(1-\alpha)}{2(1+\alpha)}}\bigg\}\\&\ \ +N\big(\int_{B(x_0,\rho)}|f|^{\frac{12}{11}}\big)^{\frac{11}{8}}.
\end{align*}
Integrating with respect to $t$ in $(t_1-r^2,t_1)$ and applying H\"{o}lder inequality completes the proof of \eqref{D_est_2}.
\end{proof}
\begin{lemma}
Suppose $\theta \in (0,1/2], \rho >0$ are constants and $Q(z_0,\rho) \subset Q_T$. Then we have
\begin{equation*}
A(\theta \rho)+E(\theta \rho) \leq N \theta^{-2}\big[C^{2/3}(\rho)+C(\rho)+C^{1/3}(\rho)D^{2/3}(\rho)+G(\rho)\big].
\end{equation*}
In particular, when $\theta=1/2$ we have
\begin{equation}
\label{AE_est_2}
A(\rho/2)+E(\rho/2) \leq N \big[C^{2/3}(\rho)+C(\rho)+C^{1/3}(\rho)D^{2/3}(\rho)+G(\rho)\big].
\end{equation}
\end{lemma}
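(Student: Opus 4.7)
The plan is to apply the generalized local energy inequality \eqref{energy} with a carefully chosen nonnegative cutoff $\psi$ and then to bound each resulting right-hand side term by one of the scale-invariant quantities on $Q(z_0,\rho)$. I would take $\psi\in C_0^\infty$ that equals one on $Q(z_0,\theta\rho)$, is supported in a slightly enlarged cylinder strictly inside $Q(z_0,\rho)$, and satisfies $|\nabla\psi|\le N\rho^{-1}$ and $|\psi_t|+|\Delta\psi|\le N\rho^{-2}$. Such a $\psi$ is available precisely because $\theta\le 1/2$, so that the transition region can be taken of width comparable to $\rho$ and the derivatives of $\psi$ scale with $\rho^{-1}$ rather than with $(\theta\rho)^{-1}$. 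With this choice, the left-hand side of \eqref{energy} bounds $(\theta\rho)^2\bigl[A(\theta\rho)+2E(\theta\rho)\bigr]$ from below, so dividing through by $(\theta\rho)^2$ at the end will produce the required factor $\theta^{-2}$.

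Before estimating the right-hand side I would use incompressibility --- $\nabla\cdot u=0$ and $[p]_{x_0,\rho}$ depends only on $t$ --- to replace $p$ by $p-[p]_{x_0,\rho}$ in the pressure term $2pu\cdot\nabla\psi$. The four resulting integrals are then bounded one by one by H\"older's inequality, matched to the definitions of $C,D,G$. Using $|Q(\rho)|\sim\rho^{6}$, the terms containing $\psi_t+\Delta\psi$, $|u|^2 u\cdot\nabla\psi$, $(p-[p]_{x_0,\rho})u\cdot\nabla\psi$, and $f\cdot u\,\psi$ contribute, respectively, quantities controlled by $N\rho^2 C^{2/3}(\rho)$, $N\rho^2 C(\rho)$, $N\rho^2 C^{1/3}(\rho)D^{2/3}(\rho)$, and $N\rho^2 G^{1/2}(\rho)C^{1/3}(\rho)$; the first uses the interpolation $\int|u|^2\le |Q|^{1/3}(\int|u|^3)^{2/3}$, the third combines the $L^{3/2}$-norm of $p-[p]_{x_0,\rho}$ with the $L^3$-norm of $u$, and the last uses Cauchy--Schwarz together with the H\"older embedding $L^6\hookrightarrow L^2$ on $Q(\rho)$. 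Young's inequality then splits the forcing contribution as $N\rho^2\bigl(G(\rho)+C^{2/3}(\rho)\bigr)$, which is absorbed into the existing terms. Summing and dividing by $(\theta\rho)^2$ yields the claim.

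The argument is essentially routine compared with the delicate pressure estimates of Lemma \ref{D_est_lemma}, so I do not expect a serious obstacle. The one point that genuinely deserves attention is the choice of cutoff width: without the assumption $\theta\le 1/2$, one would have to let $\psi$ transition over a shorter scale and would then pick up extra negative powers of $\theta$ in the cubic and pressure terms, losing the stated dependence $\theta^{-2}$. It is also worth noting that the full strength $f\in L_{6,\mathrm{loc}}$ enters here through the definition of $G$, reflecting the super-critical nature of the four-dimensional problem; the exponent matching in the forcing estimate would fail with $f$ of lower integrability.
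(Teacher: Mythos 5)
Your proposal is correct and follows essentially the same route as the paper: apply the local energy inequality with a cutoff that transitions over a width comparable to $\rho$ (so its derivatives cost only powers of $\rho^{-1}$, not $(\theta\rho)^{-1}$), replace $p$ by $p-[p]_{x_0,\rho}$ using $\nabla\cdot u=0$, bound each term by H\"older against $C$, $D$, $G$, and divide by $(\theta\rho)^2$. The only cosmetic difference is in the forcing term, where you use Cauchy--Schwarz first and Young afterward, while the paper applies Young's inequality directly to $|f||u|$ before invoking H\"older; both yield the same contribution $N\theta^{-2}\bigl(G(\rho)+C^{2/3}(\rho)\bigr)$.
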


\begin{proof}
Let $r=\theta \rho$. In the energy inequality \eqref{energy}, we set $t=t_0$ and choose a suitable smooth cut-off function $\psi$ such that
\begin{align*}
\psi \equiv 0 \ \ \text{in} \ \ Q_{t_0}\backslash Q(z_0,\rho),\ \  0\leq \psi \leq 1 \ \ \text{in} \ \ Q_T,\\
\psi \equiv 1 \ \ \text{in} \ \ Q(z_0,r), \ \ |\nabla \psi| < N \rho^{-1}, \ \ |\partial_t \psi|+|\nabla^2 \psi| \leq N \rho^{-2}\ \ \text{in} \ \ Q_{t_0}.
\end{align*}
By using \eqref{energy} and because $u$ is divergence free, we get
\begin{multline*}
A(r)+2E(r) \leq \dfrac{N}{r^2}\Big[\dfrac{1}{\rho^2}\int_{Q(z_0,\rho)}|u|^2\,dz +\dfrac{1}{\rho}\int_{Q(z_0,\rho)}(|u|^2+2|p-[p]_{x_0,\rho}|)|u|\,dz\\
+\int_{Q(z_0,\rho)}|f||u|\,dz\Big].
\end{multline*}
Using H\"{o}lder's inequality and Young's inequality, one can obtain
\begin{equation*}
\int_{Q(z_0,\rho)}|u|^2\, dz \leq\big(\int_{Q(z_0,\rho)}|u|^3\, dz\big)^{2/3}\big(\int_{Q(z_0,\rho)}\,dz\big)^{1/3} \leq \rho^4C^{2/3}(\rho),
\end{equation*}
\begin{align*}
\int_{Q(z_0,\rho)}|p-[p]_{x_0,\rho}||u| \,dz
&\leq \big(\int_{Q(z_0,\rho)}|p-[p]_{x_0,\rho}|^{3/2}\,dz\big)^{2/3}
\big(\int_{Q(z_0,\rho)}|u|^3\,dz\big)^{1/3}\\
&\leq N \rho^3D^{2/3}(\rho)C^{1/3}(\rho),
\end{align*}
and
\begin{align*}
\int_{Q(z_0,\rho)}|f||u| \,dz &\leq \rho^2\int_{Q(z_0,\rho)}|f|^2\,dz + \dfrac{1}{\rho^2} \int_{Q(z_0,\rho)}|u|^2\,dz\\
&\leq \rho^6\Big[\int_{Q(z_0,\rho)}|f|^6\,dz\Big]^{1/3} + \dfrac{1}{\rho^2} \int_{Q(z_0,\rho)}|u|^2\,dz.
\end{align*}
Then the conclusion follows immediately.

\end{proof}
As a conclusion, we can obtain
\begin{proposition}
\label{prop1}
For any $\epsilon_0 >0$, there exists $\epsilon_1 >0$ small such that for any $z_0 \in Q_T $ satisfying
\begin{equation}
\limsup_{r \searrow 0} E(r)\leq \epsilon_1,
\label{e_cond}
\end{equation}
we can find $\rho_0$ sufficiently small such that
\begin{equation}
A(\rho_0)+E(\rho_0)+C(\rho_0)+D(\rho_0)+F(\rho_0) \leq \epsilon_0.
\label{e_conl}
\end{equation}

\end{proposition}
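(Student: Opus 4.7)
The plan is to iterate the scale-invariant bounds just established---the $C$-decay estimate \eqref{C_est}, the $D$- and $F$-decay estimates of Lemma \ref{D_est_lemma}, and the energy-type bound \eqref{AE_est_2} for $A+E$---along a geometric sequence of scales $\rho_k := \gamma^k \rho_*$, where $\gamma \in (0, 1/8]$ and $\rho_* > 0$ are to be chosen. First I would exploit the hypothesis $\limsup_{r \searrow 0} E(r) \le \epsilon_1$, together with $f \in L_{6,\mathrm{loc}}$ (which forces $G(\rho) \to 0$ as $\rho \to 0$), to pick $\rho_*$ so small that $E(\rho) \le 2\epsilon_1$ and $G(\rho) \le \epsilon_1$ for every $\rho \le \rho_*$. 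At the base scale, the quantities $A(\rho_*), C(\rho_*), D(\rho_*), F(\rho_*)$ are a priori finite for a suitable weak solution; denote their sum by $M_0$.

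Next I would fix $\alpha = 1/2$ (so that $F$ and $D$ share the same decay exponent) and select $\gamma$ small enough that each self-factor $N\gamma$, $N\gamma^{5/2}$, and $N\gamma^{(3-\alpha)/(1+\alpha)}$ appearing in the decay inequalities is at most $1/4$, giving strict geometric contraction of the self-part. Setting $\phi_k := C(\rho_k) + D(\rho_k) + F(\rho_k)$, I would apply \eqref{AE_est_2} at scale $2\rho_{k-1}$ to replace each occurrence of $A(\rho_k)$ and $E(\rho_k)$ on the right-hand sides of the decay inequalities by sublinear powers of $\phi_{k-1}$ plus a $G_{k-1}$ term. This produces a coupled recursion of the schematic form
\begin{equation*}
\phi_{k+1} \le \tfrac{1}{4}\phi_k + N(\gamma)\epsilon_1^{\theta}\bigl(1 + \phi_{k-1}^{\sigma}\bigr) + N(\gamma)G_k^{\mu},
\end{equation*}
with positive exponents $\theta,\sigma,\mu$ determined by the explicit powers in the cited lemmas. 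Summing this geometric-type inequality yields $\phi_k \le 4^{-k}M_0 + C(\gamma)\bigl[\epsilon_1^{\theta} + \sup_{j \ge k/2} G_j^{\mu}\bigr]$, and choosing $k$ large and then $\epsilon_1$ small makes $\phi_k \le \epsilon_0/2$. One final application of \eqref{AE_est_2} then delivers $A(\rho_{k+1}) + E(\rho_{k+1}) \le \epsilon_0/2$, and taking $\rho_0 := \rho_{k+1}$ satisfies the required bound \eqref{e_conl}.

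The main obstacle, and the reason the argument is not a one-line iteration, is the nonlinear coupling in the decay estimates: the right-hand sides contain cross terms such as $A^{1/2}E$, $A^{3/4}E^{3/4}$, and $A^{(1-\alpha)/(1+\alpha)}E^{2\alpha/(1+\alpha)}$ in which the factor of $A$ is not a priori controlled by the hypothesis on $E$. The fix is to trade each $A$-factor for a sublinear power of $\phi_{k-1}+G_{k-1}$ via \eqref{AE_est_2}, but then one must check that the resulting combined coefficient on $\phi_{k-1}$ carries either a genuine positive power of $\epsilon_1$ (from the $E$-factor) or a small power of $\gamma$, so that the iteration truly contracts and the small parameter $\epsilon_1$ gets propagated through. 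With the explicit exponents in Lemma \ref{D_est_lemma}, this verification amounts to a finite algebraic bookkeeping of powers, but it is the delicate point where the subcritical gains in the previous lemmas must be used correctly.
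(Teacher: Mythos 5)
Your proposal follows essentially the same approach as the paper: iterate the decay estimates of Lemmas 3.2, 3.3 and 3.5 along a geometric sequence of scales, using the smallness of $E$ and the vanishing of $G$ (from $f\in L_{6,\mathrm{loc}}$) together with Young's inequality to produce a genuine contraction with a small source term; the bookkeeping you flag as delicate is exactly the Young-inequality absorption carried out in arriving at the paper's \eqref{erttt}. The only cosmetic difference is that the paper iterates $\Phi=A+E+C+D$ directly and appends the bound on $F$ afterward via one application of \eqref{F_est}, whereas you fold $F$ into the iterated quantity and recover $A+E$ at the end from \eqref{AE_est_2}.
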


\begin{proof}
First, we prove \eqref{e_conl} without the presence of $F$ on the left-hand side. For a given point
\begin{equation*}
z_0=(x_0,t_0) \in Q_T
\end{equation*}
satisfying \eqref{e_cond}, choose $\rho_1 > 0$ such that $Q(z_0,\rho_1) \subset Q_T$. Then for any $\rho \in (0,\rho_1]$ and $\gamma \in (0,1/8)$, by using \eqref{AE_est_2} and Young's inequality,
\begin{equation*}
A(\gamma\rho)+E(\gamma \rho) \leq N\big[C^{2/3}(2\gamma\rho)+C(2\gamma\rho)+D(2\gamma\rho)+G(2\gamma\rho)\big].
\end{equation*}
Then, combining with \eqref{C_est} and \eqref{D_est} and using Young's inequality again, we have
\begin{align}
\nonumber&A(\gamma \rho)+E(\gamma \rho)+C(\gamma \rho)+D(\gamma \rho)\\
\nonumber&\leq N\big[\gamma^{2/3}C^{2/3}(\rho)+\gamma^{5/2}D(\rho)+\gamma C(\rho)+\gamma A(\rho)\big]\\
\nonumber&\ \ +N\gamma^{-100}\big(E(\rho)+E^3(\rho)+G(\rho)\big)+N \gamma^{2/3}\\
\nonumber&\leq N\gamma^{2/3}\big[A(\rho)+E(\rho)+C(\rho)+D(\rho)\big]+N \gamma^{2/3}\\
&\ \ +N\gamma^{-100}\big(E(\rho)+E^3(\rho)+G(\rho)\big).
\label{erttt}
\end{align}
Since $f\in L_{6,\text{loc}}(Q_T)$, we have
\begin{equation}
                            \label{11.30}
G(\rho) \leq \|f\|^2_{L_{6}(Q(z_0,\rho_1))}\rho^{4}.
\end{equation}
It is easy to see that for any $\epsilon_0 > 0$, there are sufficiently small real numbers $\gamma \leq 1/(2N)^{3/2}$ and $\epsilon_1$ such that if \eqref{e_cond} holds then for all small $\rho$ we have
\begin{equation*}
N\gamma^{2/3}+N\gamma^{-100}\big(E(\rho)+E^3(\rho)+G(\rho)\big)<\epsilon_0/2.
\end{equation*}
By using \eqref{erttt}, we can obtain
\begin{equation*}
A(\rho_0)+C(\rho_0)+D(\rho_0) \leq \epsilon_0
\end{equation*}
for some $\rho_0 >0$ small enough. To include $F$ in the estimate, it suffices to use \eqref{F_est}.
\end{proof}

\subsection{Step 2.}
In the second step, first we will estimate the values of $A$, $E$, $C$, and $F$ in a smaller ball by the values of themselves in a larger ball.
\begin{lemma}
\label{eeeeee}
Suppose $\rho >0$, $\theta \in (0,1/16]$ are constants and $Q(z_1,\rho) \subset Q_T$. Then we have
\begin{multline}
\label{eree}
A(\theta \rho)+E(\theta \rho) \leq N \theta^2A(\rho)+N\theta^{-3}\big[A(\rho)+E(\rho)+F(\rho)\big]^{3/2}\\
+N\theta^{-3}G^{3/4}(\rho)+N\theta^{-6}G(\rho),
\end{multline}
where $N$ is a constant independent of $\rho$, $\theta$, and $z_1$.
\end{lemma}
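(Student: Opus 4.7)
I plan to prove this weak decay estimate by adapting the splitting technique of Dong and Strain \cite{Dong_12}, developed for the 6D stationary problem, to the parabolic 4D setting. The essential point is that a direct application of \eqref{energy} with a cutoff supported in $Q(z_1,\rho/2)$ and equal to one on $Q(z_1,\theta\rho)$ would contribute an $N\theta^{-2}A(\rho)$ from the $\int|u|^2(\psi_t+\Delta\psi)\,dz$ term, which goes the wrong direction in $\theta$. The correct $\theta^2$ decay must therefore be extracted from the smoothing property of the heat kernel rather than from the energy inequality alone.

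On $Q(z_1,\rho/2)$ I decompose $u=v+w$, where $v$ is the caloric extension solving $v_t-\Delta v=0$ with $v=u$ on the parabolic boundary, and $w=u-v$ vanishes on $\partial Q(z_1,\rho/2)$ and satisfies
$$ w_t-\Delta w = -\nabla\cdot(u\otimes u)-\nabla(p-[p]_{x_1,\rho/2})+f \quad\text{in } Q(z_1,\rho/2).$$
For $v$, interior $L^\infty$-regularity for the heat equation gives, when $\theta\le 1/16$, the pointwise estimate $\sup_{Q(z_1,\theta\rho)}(|v|^2+(\theta\rho)^2|\nabla v|^2)\le N\rho^{-6}\|v\|_{L^2(Q(z_1,\rho/4))}^2$. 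After multiplying by the appropriate parabolic volumes $(\theta\rho)^4$ and $(\theta\rho)^6$ and dividing by $(\theta\rho)^2$, this produces $A^v(\theta\rho)+E^v(\theta\rho)\le N\theta^2\rho^{-4}\|v\|_{L^2(Q(\rho/4))}^2$. Using $|v|^2\le 2|u|^2+2|w|^2$ together with the trivial bound $\rho^{-4}\|u\|_{L^2(Q(\rho/2))}^2\le NA(\rho)$ produces the desired leading $N\theta^2 A(\rho)$ term, with a remainder involving only $\|w\|_{L^2}^2$.

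For the correction $w$, I will apply parabolic $L^{3/2}$-regularity in the space $\cH^1_{3/2}$ defined in Section \ref{s1}, followed by the parabolic Sobolev embedding $\cH^1_{3/2}\hookrightarrow L^2$ that is valid in dimension four (since $\tfrac{1}{2}=\tfrac{2}{3}-\tfrac{1}{6}$), to bound $\|w\|_{L^2(Q(\rho/2))}$ by $\|u\otimes u\|_{L^{3/2}}+\|p-[p]\|_{L^{3/2}}+\rho\|f\|_{L^{3/2}}$. Invoking the Step~1 inputs $C\le N(A+E)^{3/2}$, the identity $D=F^{3/2}$ corresponding to $\alpha=1/2$, and the scaling of $G$, these norms are controlled by powers of $A+E+F$ and $G$. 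Combining via $A^u+E^u\le 2(A^v+E^v)+2(A^w+E^w)$ and applying Young's inequality with $\theta$-dependent weights to separate the mixed $(A+E+F)$--$G$ contributions then yields the three pieces $\theta^{-3}(A+E+F)^{3/2}$, $\theta^{-3}G^{3/4}$, and $\theta^{-6}G$.

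The main obstacle is the borderline 4D Sobolev embedding: $L^\infty_tL^2_x\cap L^2_tH^1_x$ does not embed into $L^4_{t,x}$, so $\int|u|^4$ is not controlled by $(A+E)^2$ alone, and the $L^2$-based energy estimate for $w$ that would be standard in lower-dimensional settings is unavailable. This is precisely the reason for introducing the auxiliary mixed-norm pressure quantity $F$ in Section \ref{s1} and for the exponent $3/2$ in $(A+E+F)^{3/2}$. A secondary subtlety is that the $L^2_{t,x}$-control of $w$ coming from $\cH^1_{3/2}\hookrightarrow L^2$ must be upgraded to an $L^\infty_tL^2_x$-bound in order to control $A^w(\theta\rho)$, which requires either a maximal-regularity estimate in a matching solution space or a further parabolic interpolation argument.
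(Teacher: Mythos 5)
Your proposal takes a genuinely different route from the paper. The paper does not split $u$ at all in this lemma; instead it tests the local energy inequality \eqref{energy} with $\psi=\Gamma\phi$, where $\Gamma$ is the backward heat kernel centered at $z_1$ with ``width'' $r=\theta\rho$ and $\phi$ is a standard cutoff on $Q(z_1,\rho)$. The identity $\Gamma_t+\Delta\Gamma=0$ annihilates the dangerous term $|u|^2(\psi_t+\Delta\psi)$ where $\phi$ is constant, so that term is supported only in the outer annulus where $\Gamma$ and $\nabla\Gamma$ are of size $\rho^{-4}$ and $\rho^{-5}$ independently of $\theta$. Combined with $\Gamma\phi\gtrsim(\theta\rho)^{-4}$ on $Q(z_1,\theta\rho)$, this produces the $\theta^2A(\rho)$ gain directly, plus $\theta^{-3}(C+D)$ and $\theta^{-6}G$; the lemma then follows after substituting \eqref{hhh} and \eqref{D_est_2}. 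This is the classical Caffarelli--Kohn--Nirenberg/Lin weighted-energy technique, and it never requires an energy estimate for any piece of $u$.

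Your caloric splitting $u=v+w$ has a gap that is more serious than the one you flag. You correctly note that to control $A^w(\theta\rho)$ you would need an $L^\infty_tL^2_x$ bound on $w$, which $\cH^1_{3/2}\hookrightarrow L^2_{t,x}$ does not provide. But you do not address $E^w(\theta\rho)$ at all, and that is the harder obstruction. The quantity $E^w(\theta\rho)=(\theta\rho)^{-2}\int_{Q(\theta\rho)}|\nabla w|^2\,dz$ requires $\nabla w\in L^2$ locally, whereas the divergence-form parabolic estimate with right-hand side $-\nabla\cdot(u\otimes u)-\nabla(p-[p])+f$ only gives $\nabla w\in L_{3/2}$, since $u\otimes u\in L_{3/2}$ from $u\in L_3$ and $p-[p]\in L_{3/2}$. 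Upgrading to $\nabla w\in L_2$ would require $u\otimes u\in L_2$, i.e.\ $u\in L_4$, which is precisely the borderline embedding $L_\infty L_2\cap L_2 W^1_2\not\hookrightarrow L_4$ in four dimensions that the paper identifies as the fundamental obstacle. So the $\cH^1_{3/2}$ framework cannot reach $E(\theta\rho)$. The paper's Step~3 does use the decomposition you describe, but there the target is an $L_{3/2}$ mean-oscillation bound on $u$ via Lemma \ref{lem11.31}, which only needs $\nabla w\in L_{3/2}$; that is why the splitting works there but cannot be repurposed to prove this lemma. To fix your argument you would essentially have to abandon the caloric split and adopt the weighted test function, or find a completely different way to control $E^w$.
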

\begin{proof}
Let $r=\theta \rho$. Define the backward heat kernel as
\begin{equation*}
\Gamma(x,t)=\dfrac{1}{4\pi^2(r^2+t_1-t)^2}e^{-\frac{|x-x_1|^2}{2(r^2+t_1-t)}}.
\end{equation*}
In the energy inequality \eqref{energy} we put $t=t_1$ and choose $\psi=\Gamma\phi := \Gamma\phi_1(x)\phi_2(t)$, where $\phi_1$, $\phi_2$ are suitable smooth cut-off functions satisfying
\begin{align}
\nonumber\phi_1 \equiv 0 \ \ \text{in} \ \ \mathbb{R}^4\backslash B(x_1,\rho), \ \ 0\leq \phi_1\leq 1 \ \ \text{in} \ \ \mathbb{R}^4, \ \ \phi_1\equiv1 \ \ \text{in} \ \ B(x_1,\rho/2),\\
\nonumber \phi_2 \equiv 0 \ \ \text{in}\ \ (-\infty, t_1-\rho^2)\cup(t_1+\rho^2, +\infty), \ \ 0\leq\phi_2\leq 1 \ \ \text{in} \ \ \mathbb{R}, \\
\nonumber \phi_2 \equiv 1 \ \ \text{in} \ \ (t_1-\rho^2/4,t_1+\rho^2/4),\ \ |\partial_t\phi_2|\leq N \rho^{-2} \ \ \text{in} \ \ \mathbb{R},\\
|\nabla\phi_1| \leq N \rho^{-1}, |\nabla^2 \phi_1| \leq N \rho^{-2} \ \ \text{in} \ \ \mathbb{R}^4.
\label{qq}
\end{align}
By using the equality
\begin{equation*}
\Delta \Gamma + \Gamma_t =0,
\end{equation*}
we have
\begin{align}
\nonumber&\int_{B(x_1,\rho)}|u(x,t)|^2\Gamma(t,x)\phi(x,t) \,dx +2 \int_{Q(z_1,\rho)}|\nabla u|^2 \Gamma \phi \,dz\\
\nonumber&\leq \int_{Q(z_1,\rho)}\big\{|u|^2(\Gamma \phi_t+\Gamma \Delta \phi+2\nabla\phi\nabla\Gamma)+(|u|^2+2p)u\cdot (\Gamma\nabla \phi+\phi\nabla \Gamma)\big\}\,dz\\
&\ \ +\int_{Q(z_1,\rho)}|f||u||\Gamma\phi|\,dz.
\label{ggg}
\end{align}

With straightforward computations, it is easy to see the following three properties:
(i) For some constant $c>0$, on $\bar{Q}(z_1,r)$ it holds that
\begin{equation*}
\Gamma \phi = \Gamma \geq c r^{-4}.
\end{equation*}
(ii) For any $z \in Q(z_1,\rho)$, we have
\begin{equation*}
|\Gamma(z)\phi(z)| \leq N r^{-4},\ \ |\phi(z)\nabla\Gamma(z)|+|\nabla\phi(z)\Gamma(z)| \leq N r^{-5}.
\end{equation*}
(iii) For any $z \in Q(z_1,\rho)\backslash Q(z_1,r)$, we have
\begin{equation*}
|\Gamma(z)\phi_t(z)|+|\Gamma(z)\Delta\phi(z)|+|\nabla\phi\nabla\Gamma| \leq N\rho^{-6}.
\end{equation*}
These properties together with \eqref{qq} and \eqref{ggg} yield
\begin{equation}
A(r)+E(r) \leq N\big[\theta^2A(\rho)+\theta^{-3}(C(\rho)+D(\rho))+\theta^{-6}G(\rho)\big].
\label{hh}
\end{equation}
Owing to \eqref{interpolation} with $q=3$, we can get
\begin{equation}
C(\rho/8)\leq NC(\rho)\leq N\big[A(\rho)+E(\rho)\big]^{3/2}.
\label{hhh}
\end{equation}
By using \eqref{D_est_2} with $\gamma=1/8$, we have
\begin{equation}
D(\rho/8)\leq N\big[A(\rho)+E(\rho)+F(\rho)\big]^{3/2}+N G^{3/4}(\rho).
\label{hhhh}
\end{equation}
Upon combining \eqref{hh} (with $\rho/8$ in place of $\rho$) to \eqref{hhhh} together, the lemma is proved.
\end{proof}
\begin{lemma}
\label{9i}
Suppose $\rho>0$ is constant and $Q(z_1,\rho) \subset Q_T$. Then we can find $\theta_1 \in (0,1/256]$ small such that
\begin{multline}
A(\theta_1\rho)+E(\theta_1\rho)+F(\theta_1\rho) \leq \dfrac{1}{2}\big[A(\rho)+E(\rho)+F(\rho)\big]\\
+N(\theta_1)\big[A(\rho)+E(\rho)+F(\rho)\big]^{3/2}
+N(\theta_1)\big[G(\rho)+G^{1/2}(\rho)\big],
\label{90}
\end{multline}

where $N$ is a constant independent of $\rho$ and $z_1$.
\end{lemma}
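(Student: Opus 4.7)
The plan is to add the bound of Lemma~\ref{eeeeee} for $A(\theta_1\rho)+E(\theta_1\rho)$ to the bound \eqref{F_est} for $F(\theta_1\rho)$, then absorb the resulting cross terms by a Young-type inequality and a small enough choice of $\theta_1$.

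First, I would apply \eqref{eree} with $\theta=\theta_1$ to get
\[
A(\theta_1\rho)+E(\theta_1\rho)\le N\theta_1^{2}A(\rho)+N\theta_1^{-3}\bigl(A(\rho)+E(\rho)+F(\rho)\bigr)^{3/2}+N\theta_1^{-3}G^{3/4}(\rho)+N\theta_1^{-6}G(\rho),
\]
so that the only linear contribution in $A(\rho),E(\rho),F(\rho)$ on the right-hand side is $N\theta_1^{2}A(\rho)$, whose coefficient can be made $\le\tfrac14$ by taking $\theta_1$ small. Next I would apply \eqref{F_est} at scale $\rho$ with $\gamma=\theta_1$ and a fixed $\alpha\in[1/11,1/2]$:
\[
F(\theta_1\rho)\le N\theta_1^{-2}\,A^{(1-\alpha)/(1+\alpha)}(\rho/2)\,E^{2\alpha/(1+\alpha)}(\rho/2)+N\theta_1^{(3-\alpha)/(1+\alpha)}F(\rho)+N\theta_1^{-2}G^{1/2}(\rho).
\]
Since $(3-\alpha)/(1+\alpha)>1$ for every $\alpha<1$, the middle term comes with a small prefactor on $F(\rho)$ after choosing $\theta_1$ small.

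The main obstacle is the cross term $A^{(1-\alpha)/(1+\alpha)}E^{2\alpha/(1+\alpha)}$, whose two exponents sum to exactly $1$: an AM--GM bound returns only the linear $A+E$, which paired with the prefactor $N\theta_1^{-2}$ is far too large to fit inside $\tfrac12(A+E+F)$. I would overcome this with a weighted Young's inequality that splits the product into an $A^{3/2}$-piece (directly absorbed into the $(A+E+F)^{3/2}$ term) and a sub-linear $E^{s}$-piece ($s<1$), which in turn is bounded by $\delta E + C_\delta$-type terms; choosing the weight proportional to a small power of $\theta_1$ keeps the residual $E$-coefficient $\le\tfrac14$, at the cost of a large constant that is swallowed by the freely-growing prefactor $N(\theta_1)$ on the $3/2$-power. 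The scale shift from $\rho/2$ back to $\rho$ is handled by the crude monotonicity $A(\rho/2),E(\rho/2)\le 4A(\rho),4E(\rho)$.

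Finally, summing the two displayed inequalities, fixing $\theta_1\in(0,1/256]$ small enough so that every linear coefficient on $A(\rho),E(\rho),F(\rho)$ falls below $\tfrac12$, and bundling the force-dependent terms by $G^{3/4}\le G+G^{1/2}$ (Young) into $N(\theta_1)(G+G^{1/2})$ yields \eqref{90}. The crux is the weighted Young step for the mixed $A^{a}E^{b}$ term, since the equality $a+b=1$ forces a delicate trade-off in order to recover the required $\tfrac12$ linear coefficient while dumping the rest into the $3/2$-power remainder allowed by the statement.
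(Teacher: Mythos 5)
There is a genuine gap, and it lies exactly where you flag the "crux". You apply \eqref{F_est} directly at scale $\rho$ with factor $\theta_1$, which produces the term $N\theta_1^{-2}A^{a}(\rho/2)E^{b}(\rho/2)$ with $a+b=1$. Because the two exponents sum to $1$, any Young split must send at least one factor to a \emph{sub}linear power: if $ap=3/2$ then necessarily $bq<1$ (one checks $bq=6\alpha/(1+5\alpha)<1$ for all $\alpha<1$). The leftover piece $C_\epsilon E^{s}$, $s<1$, cannot be absorbed. Bounding $E^{s}\le\delta E+C_\delta$ produces a standalone constant, and \eqref{90} has no constant term on the right; more to the point, in the only regime in which Lemma~\ref{9i} is used (Proposition 3.9, where $A+E+F\le\epsilon'$ is small), the candidate absorber $N(\theta_1)(A+E+F)^{3/2}$ is itself small, so it cannot swallow a fixed constant, and for small $E$ one even has $E^{s}>E$. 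The iteration \eqref{it} would then fail to decay. So the weighted-Young repair does not close.

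The paper avoids this by a two-parameter rescaling. It applies \eqref{F_est} at the \emph{intermediate} radius $\theta\rho$ with a second factor $\gamma$, so that after the crude AM--GM bound $A^{a}E^{b}\le A+E$ the linear piece sits at scale $\theta\rho$, where \eqref{eree} can be applied to trade $A(\theta\rho)+E(\theta\rho)$ for $N\theta^{2}A(\rho)$ plus superlinear and force terms. This yields a linear coefficient $N\gamma^{-2}\theta^{2}$ on $A(\rho)$ and $N\gamma^{(3-\alpha)/(1+\alpha)}\theta^{-2}$ on $F(\rho)$; since $(3-\alpha)/(1+\alpha)>2$ for $\alpha<1/3$ (the paper fixes $\alpha=1/5$), both can be made $\le 1/2$ simultaneously by choosing $\gamma,\theta$ small with an appropriate ratio. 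Setting $\theta_1=\gamma\theta$ gives \eqref{90}. In short: the missing ingredient in your argument is the second small scale; a single factor $\theta_1$ cannot defeat the $\theta_1^{-2}$ blow-up in front of the $1$-homogeneous cross term.
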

\begin{proof}
Due to \eqref{F_est} and \eqref{eree}, for any $\gamma,\theta \in (0,1/16]$, we have
\begin{align}
\nonumber F(\gamma \theta\rho)\leq &N\big[\gamma^{-2}(A(\theta\rho)+E(\theta\rho))
+\gamma^{(3-\alpha)/(1+\alpha)}F(\theta\rho)
+\gamma^{-2}G^{1/2}(\theta\rho)\big]\\
\nonumber \leq &N\gamma^{-2}\theta^2A(\rho)+\gamma^{(3-\alpha)/(1+\alpha)}\theta^{-2}F(\rho)
+\gamma^{-2}G^{1/2}(\rho)\\
&+N\gamma^{-2}\theta^{-3}\big[A(\rho)+E(\rho)+F(\rho)\big]^{3/2},\label{ii}
\end{align}
and
\begin{align}
\nonumber A(\gamma\theta\rho)+E(\gamma\theta\rho)&\leq N(\gamma\theta)^2A(\rho)+N(\gamma\theta)^{-3}\big[A(\rho)+E(\rho)+F(\rho)\big]^{3/2}\\
&\,\,+N(\gamma\theta)^{-6}G(\rho)
+N(\gamma\theta)^{-3}G^{3/4}(\rho).\label{iii}
\end{align}
Now we set $\alpha=1/5$ such that $(3-\alpha)/(1+\alpha)=7/3 >2$, and choose and fix $\gamma$ and $\theta$ sufficiently small such that
\begin{equation*}
N\big[\gamma^{-2}\theta^2+\gamma^{7/3}\theta^{-2}+\gamma^2\theta^2\big] \leq \dfrac{1}{2}.
\end{equation*}
Upon adding \eqref{ii} and \eqref{iii}, we can obtain
\begin{align*}
A(\gamma\theta\rho)+E(\gamma\theta\rho)+F(\gamma\theta\rho) &\leq \dfrac{1}{2}A(\rho)+N\big[A(\rho)+E(\rho)+F(\rho)\big]^{3/2}\\
&\,\,+N\big[G(\rho)+G^{1/2}(\rho)+G^{3/4}(\rho)\big],
\end{align*}
where $N$ only depends on $\theta$ and $\gamma$. After putting $\theta_1=\gamma\theta$, the lemma is proved.
\end{proof}

In the next proposition we will study the decay property of $A$, $C$,$E$ and $F$ as the radius $\rho$ goes to zero.
\begin{proposition}
There exists $\epsilon_0>0$ satisfying the following property. Suppose that for some $z_0 \in Q_T $ and $\rho_0 >0$ satisfying $Q(z_0,\rho_0) \subset Q_T$ we have
\begin{equation}
C(\rho_0)+D(\rho_0)+F(\rho_0) +G(\rho_0)\leq \epsilon_0.
\label{cond_cdf}
\end{equation}
Then we can find $N>0$ and $\alpha_0 \in (0,1)$ such that for any $\rho \in (0,\rho_0/8)$ and $z_1 \in Q(z_0,\rho/8)$, the following inequality will hold uniformly
\begin{equation}
A(\rho,z_1)+C(\rho,z_1)+E(\rho,z_1)+F(\rho,z_1) + D(\rho,z_1)\leq N \rho ^{\alpha_0},
\label{cdf}
\end{equation}
where $N$ is a positive constant independent of $\rho$ and $z_1$.
\end{proposition}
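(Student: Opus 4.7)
The strategy is to iterate the recursion of Lemma \ref{9i} at the fixed center $z_0$ and then transfer the resulting decay to every nearby $z_1$ via the one-scale estimates already proved in Step 1. Write $\Phi(\rho, z) := A(\rho, z) + E(\rho, z) + F(\rho, z)$. The key recursion \eqref{90} reads
$$
\Phi(\theta_1 \rho, z) \leq \tfrac{1}{2}\Phi(\rho, z) + N\Phi(\rho, z)^{3/2} + N\bigl[G(\rho, z) + G^{1/2}(\rho, z)\bigr],
$$
and $f \in L_{6,\text{loc}}$ combined with \eqref{11.30} yields $G(\rho, z) \leq K\rho^4$ uniformly for $z$ in a fixed neighborhood of $z_0$.

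To set up the base case at $z_0$, inequality \eqref{AE_est_2} together with \eqref{cond_cdf} yields $A(\rho_0/2, z_0) + E(\rho_0/2, z_0) \leq N \epsilon_0^{2/3}$, which, when combined with $F(\rho_0, z_0) \leq \epsilon_0$, gives $\Phi(\rho_0/2, z_0) \leq \delta$ for any prescribed $\delta$ as long as $\epsilon_0$ is sufficiently small. Fix $\alpha_0 \in (0, 1)$ with $\theta_1^{\alpha_0} > 1/2$ (for instance $\alpha_0 = \log(4/3)/\log(\theta_1^{-1})$), and set $\rho_k := \theta_1^k \rho_0/2$. I expect a standard induction, under the hypothesis $\Phi(\rho_k, z_0) \leq M\rho_k^{\alpha_0}$, to yield
$$
\Phi(\rho_{k+1}, z_0) \leq \tfrac{1}{2}M\rho_k^{\alpha_0} + NM^{3/2}\rho_k^{3\alpha_0/2} + N\bigl(K\rho_k^4 + K^{1/2}\rho_k^2\bigr) \leq M\theta_1^{\alpha_0}\rho_k^{\alpha_0},
$$
which closes by taking $M$ large enough to absorb the $K$-dependent terms (using $\alpha_0 < 2$) and $\rho_0$ small enough to absorb the $\Phi^{3/2}$ term into the gap $\theta_1^{\alpha_0} - 1/2 > 0$. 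The elementary monotonicity $A(\rho'), E(\rho'), F(\rho') \leq N(\rho/\rho')^2(A(\rho)+E(\rho)+F(\rho))$ for $\rho' \leq \rho$ then interpolates between the dyadic scales, giving $\Phi(\rho, z_0) \leq N\rho^{\alpha_0}$ for all $\rho \in (0, \rho_0/2]$.

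To transfer to $z_1 \in Q(z_0, \rho/8)$ with $\rho \in (0, \rho_0/8)$, I will use: direct domain inclusion $Q(z_1, \rho) \subset B(x_0, 2\rho)\times(t_0 - 2\rho^2, t_0)$ gives $A(\rho, z_1) + E(\rho, z_1) \leq N\bigl(A(2\rho, z_0) + E(2\rho, z_0)\bigr) \leq N\rho^{\alpha_0}$; estimate \eqref{F_est} applied at the original $z_0$ with $\rho' = 8\rho$ and $\gamma = 1/8$ gives
$$
F(\rho, z_1) \leq N\bigl[A(4\rho, z_0)^{\frac{1-\alpha}{1+\alpha}}E(4\rho, z_0)^{\frac{2\alpha}{1+\alpha}} + F(8\rho, z_0) + G^{1/2}(8\rho, z_0)\bigr] \leq N\rho^{\alpha_0},
$$
where the product exponents sum exactly to $1$ so that the first term scales as $\rho^{\alpha_0}$; next, \eqref{D_est_2} applied with the same $\rho'$ and $\gamma$ gives $D(\rho, z_1) \leq N\bigl[\rho^{3\alpha_0/2} + \rho^3\bigr] \leq N\rho^{\alpha_0}$; and finally, \eqref{hhh} applied at $z_1$ together with the $A+E$ bound above (at scale $8\rho$) yields $C(\rho, z_1) \leq N\bigl[A(8\rho, z_1) + E(8\rho, z_1)\bigr]^{3/2} \leq N\rho^{3\alpha_0/2} \leq N\rho^{\alpha_0}$.

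The main anticipated obstacle is closing the iteration in the second step with the prescribed decay exponent, which hinges essentially on the contraction factor $1/2$ in \eqref{90}; this factor was itself secured by the judicious choice $\alpha = 1/5$ in Lemma \ref{9i}, making the $F$-coefficient $\gamma^{7/3}$ strictly superlinear in $\gamma$. Uniformity in $z_1$ over $Q(z_0, \rho/8)$ is then automatic, since $\alpha_0$ and all constants depend only on the universal parameter $\theta_1$ and on $\|f\|_{L_6(Q(z_0,\rho_0))}$, not on the particular choice of $z_1$.
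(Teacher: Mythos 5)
Your proof takes a genuinely different route from the paper's, and the difference is consequential. The paper does not iterate at the fixed center $z_0$ and then transfer; it picks an \emph{arbitrary fixed} $z_1 \in Q(z_0, \rho_0/8)$, establishes the base-case smallness $A(\rho_0/8, z_1) + E(\rho_0/8, z_1) + F(\rho_0/8, z_1) \leq \epsilon'$ using the inclusion $Q(z_1, \rho_0/8) \subset Q(z_0, \rho_0/2)$ together with \eqref{AE_est_2} and \eqref{F_est}, and then iterates Lemma \ref{9i} \emph{centered at $z_1$}. It also first proves the uniform boundedness $\phi(\theta_1^k\rho_0,z_1) \leq \epsilon'$ for all $k$, uses that to absorb the $\phi^{3/2}$ term into a factor-$3/4$ linear contraction, and only then extracts the power-law decay --- a two-tier iteration rather than your direct power-law induction.

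The gap is at your transfer step. The inclusion $Q(z_1, \rho) \subset Q(z_0, 2\rho)$ and your applications of \eqref{F_est} and \eqref{D_est_2} with outer radius $8\rho$ all require the parabolic distance from $z_1$ to $z_0$ to be $O(\rho)$, so they only cover $z_1$ within $O(\rho)$ of $z_0$. That matches the letter of \eqref{cdf}, but it is not what the paper's proof establishes nor what Step 3 actually uses: the bootstrap there is claimed and run ``for all $\rho > 0$ sufficiently small and $z_1 \in Q(z_0, \rho_0/8)$'', i.e.\ with $z_1$ at a fixed positive distance from $z_0$ while $\rho \to 0$ --- for such $z_1$ one has $Q(z_1, \rho) \not\subset Q(z_0, O(\rho))$ and your transfer yields nothing. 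So your argument, even if patched, proves a strictly weaker statement than the one the paper proves and relies on. Two smaller issues: you close the induction by ``taking $\rho_0$ small enough,'' but $\rho_0$ is part of the hypothesis and cannot be shrunk --- the absorption must come from the smallness of $\Phi(\rho_0/2, z_0)$, i.e.\ from $\epsilon_0$ alone, which works but requires explicit bookkeeping of how $M$ depends on $\rho_0$, $\delta$, and $\|f\|_{L_6}$; and your transfer invokes the decay at $z_0$ at scale $8\rho$, which for $\rho$ near $\rho_0/8$ steps outside the range $(0,\rho_0/2]$ where your iteration actually produced it.
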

\begin{proof}
Fix the constant $\theta_1 \in (0,1/256]$ from Lemma \ref{9i} and let $N(\theta_1)>0$ is the same constant from \eqref{90}.  Due to \eqref{AE_est_2}, \eqref{cond_cdf}, and \eqref{F_est}, we first choose $\epsilon'>0$ and then $\epsilon_0=\epsilon_0(\epsilon')>0$ sufficiently small such that,
\begin{equation}
N(\theta_1)\sqrt{\epsilon'}\leq 1/4,\ \ N(\theta_1)\big(\epsilon_0+\epsilon_0^{1/2}\big) \leq \epsilon'/2,
\label{pop}
\end{equation}
\begin{equation*}
A(\rho_0/2)+E(\rho_0/2)\leq \dfrac{\epsilon'}{32},
\end{equation*}
and for any $z_1 \in Q(z_0,\rho_0/8)$,
\begin{equation*}
F(\rho_0/8,z_1) \leq N\big[A^{2/3}(\dfrac{\rho_0}{2})
E^{1/3}(\dfrac{\rho_0}{2})+F(\rho_0)
+G^{1/2}(\rho_0)\big]\leq \dfrac{\epsilon'}{32}.
\end{equation*}
By using
\begin{equation*}
Q(z_1,\rho_0/8)\subset Q(z_0,\rho_0/2)\subset Q_T,
\end{equation*}
we then have
\begin{equation*}
\phi(\rho_0):=A(\rho_0/8,z_1)+E(\rho_0/8,z_1)+F(\rho_0/8,z_1) \leq \epsilon'.
\end{equation*}
By using \eqref{90} and \eqref{pop} with $\rho=\rho_0/8$, we obtain inductively that
\begin{equation*}
\phi(\theta_1^k\rho_0)=A(\theta_1^k\rho_0/8,z_1)+E(\theta_1^k\rho_0/8,z_1)+F(\theta_1^k\rho_0/8,z_1) \leq \epsilon'
\end{equation*}
(holding for $k=1,2,...$). It then similarly follows from \eqref{90} and \eqref{pop} that
\begin{equation}
\phi(\theta_1^k\rho_0)\leq \dfrac{3}{4}\phi(\theta_1^{k-1}\rho_0)+N_1(\theta_1^{k-1}\rho_0)^2,
\label{it}
\end{equation}
where we have used the estimate
\begin{equation*}
G(\theta_1^{k-1}\rho_0/8,z_1)+G^{1/2}(\theta_1^{k-1}\rho_0/8,z_1)
 \leq N(\|f\|_{L_6(Q(z_0,\rho_0/2)})(\theta_1^{k-1}\rho_0)^2.
\end{equation*}
Now we use a standard iteration argument to obtain the decay rate of $\phi$. We iterate \eqref{it} to obtain
\begin{equation*}
\phi(\theta_1^k\rho_0)\leq (\dfrac{3}{4})^k \Big[\phi(\rho_0)+\dfrac{2N_1}{1-\theta_1}\rho_0^2\Big],
\end{equation*}
where we have used that $\theta_1 <3/4$.
Since $\rho \in (0,\rho_0/32)$ we can find $k$ such that $\theta_1^k\dfrac{\rho_0}{8} < 4\rho \leq \theta_1^{k-1}\dfrac{\rho_0}{8}$. Then
\begin{align*}
A(\rho,z_1)+E(\rho,z_1)+F(\rho,z_1) &\leq N(\theta_1)(\dfrac{3}{4})^k \Big[\phi(\rho_0)+\dfrac{2N_1}{1-\theta_1}\rho_0^2\Big] +N(\theta_1)\rho^2 \\ &\leq N\rho^{\alpha_0},
\end{align*}
where $N=N(\theta_1,\phi(\rho_0),N_1,\rho_0)$ and $\alpha_0=\dfrac{\log(3/4)}{\log(\theta_1)} >0 $. This yields \eqref{cdf} for the terms $A$, $E$, and $F$. The inequality for $C(\rho,z_1)$ follows from \eqref{hhh} and the inequality for $D(\rho,z_1)$ follows by \eqref{D_est_2}.
\end{proof}
\subsection{Step 3}
In the final step, we are going to use a bootstrap argument to successively improve the decay estimate \eqref{cdf}. However, as we will show below, the bootstrap argument itself only gives the decay of $E(\rho)$ no more than $\rho^{5/3}$, for instance, one can get an estimate like
\begin{equation*}
\int_{Q(z_1,\rho)}|\nabla u|^2\,dz \leq N\rho^{2+\frac{5}{3}}
\end{equation*}
for any $\rho$ sufficiently small. Unfortunately, this decay estimate is not enough for the H\"{o}lder regularity of $u$ since the spatial dimension is four (so that we need the decay exponent $4+\epsilon$ according to Morrey's lemma). We shall use parabolic regularity to fill in this gap.

First we prove Theorem \ref{th2}. We begin with the bootstrap argument. We will choose an increasing sequence of real numbers $\{\alpha_k\}_{k=1}^{m}\in (\alpha_0,5/3]$.

Under the condition \eqref{cdf}, we claim that the following estimate hold uniformly for all $\rho >0$ sufficiently small and $z_1 \in Q(z_0,\rho_0/8)$ over the range of $\{\alpha_k\}_{k=0}^m$:
\begin{align}
\label{esti}
A(\rho,z_1)+E(\rho,z_1)\leq N \rho^{\alpha_k},\,\, C(\rho,z_1)+D(\rho,z_1)\leq N\rho^{3\alpha_k/2}.
\end{align}
We prove this via iteration. The $k=0$ case for \eqref{esti} was proved in \eqref{cdf} with a possibly different exponent $\alpha_0$.

Now suppose \eqref{cdf} holds with the exponent $\alpha_k$. We first estimate $A(\rho,z_1)$ and $E(\rho,z_1)$. Let $\rho=\tilde{\theta}\tilde{\rho}$ where $\tilde{\theta}=\rho^{\mu}$, $\tilde{\rho}=\rho^{1-\mu}$ and $\mu \in (0,1)$ to be determined. We use \eqref{hh}, \eqref{esti} (for $\alpha_k$), and \eqref{11.30} to obtain
\begin{equation*}
A(\rho)+E(\rho) \leq N \rho^{2\mu}\rho^{\alpha_k(1-\mu)}+N \rho^{-3\mu}\rho^{\frac{3}{2}\alpha_k(1-\mu)}+N \rho^{4(1-\mu)}\rho^{-6\mu}.
\end{equation*}
Choose $\mu=\dfrac{\alpha_k}{10+\alpha_k}$, then \eqref{esti} is proved for $A(\rho)+E(\rho)$ with the exponent of
\begin{align*}
\tilde{\alpha}_{k+1}&:=\min\Big\{2\mu+\alpha_k(1-\mu),
\frac{3}{2}\alpha_k(1-\mu)-3\mu,4(1-\mu)-6\mu\Big\}\\
&=\dfrac{12}{10+\alpha_k}\alpha_k \in (\alpha_k,2).
\end{align*}
Then the estimate in \eqref{esti} (with $\tilde{\alpha}_{k+1}$) for $C(\rho,z_1)$ follows from \eqref{hhh}.
To prove the estimate in \eqref{esti} (with $\tilde{\alpha}_{k+1}$) for  $D(\rho,z_1)$ we will use Lemma \ref{D_est_lemma}. From \eqref{D_est} and \eqref{11.30}, we have
\begin{equation*}
D(\gamma \rho,z_1) \leq N\big[\gamma^{-3}\rho^{\frac{3}{2}\tilde{\alpha}_{k+1}}+\gamma^{5/2}D(\rho,z_1)+\gamma^{-3}\rho^3\big].
\end{equation*}
For any $r$ small, we take the supremum on both sides with respect to $\rho \in (0,r)$ and get
\begin{align*}
\sup_{\rho\in(0,r]}D(\gamma \rho,z_1) &\leq N\gamma^{-3}r^{\frac{3}{2}\tilde{\alpha}_{k+1}}+N\gamma^{5/2}\sup_{\rho\in(0,r]}D(\rho,z_1)+\gamma^{-3}r^3.
\end{align*}
Now set
\begin{equation}\label{d}
\alpha_{k+1}= \min\big\{\frac{5}{3}, \tilde{\alpha}_{k+1}\big\}.
\end{equation} 
By using a well-known iteration argument, similar to \eqref{it}, we obtain the estimate in \eqref{esti} (with $\alpha_{k+1}$) for $D(\rho)$. And the estimates in \eqref{esti} (with $\alpha_{k+1}$) for $A(\rho)+E(\rho)$ and $C(\rho)$ are still true. Then we have shown how to build the increasing sequence of $\{\alpha_k\}$ for which \eqref{esti} holds.

Moreover,
\begin{equation*}
2-\tilde\alpha_{k+1}=\dfrac{10}{10+\tilde{\alpha}_k}(2-\tilde{\alpha}_k) \leq \dfrac{10}{10+\alpha_0}(2-\tilde{\alpha}_k).
\end{equation*}
Thus, we can find a $m$ that $\alpha_m=\dfrac{5}{3}$ according to \eqref{d} because otherwise $\alpha_k=\tilde{\alpha}_k \rightarrow 2$ as $k \rightarrow \infty$.

 We have got the following estimates via the bootstrap argument:
\begin{equation}
\sup_{t_1-\rho^2 \leq t\leq t_1}\int_{B(x_1,\rho)}|u(x,t)|^2\,dx \leq N \rho^{2+\frac{5}{3}},
\label{pp}
\end{equation}
\begin{equation}
\int_{Q(z_1,\rho)}|u|^3+|p-[p]_{x_1,\rho}|^{3/2} \,dz \leq N \rho^{3+\frac{5}{2}}.
\label{ppp}
\end{equation}

Now we rewrite \eqref{ns} (in the weak sense) into
\begin{equation}
                                        \label{eq11.55}
\partial_t u_i- \Delta u_i=-\partial_j(u_iu_j)-\partial_i p+f_i.
\end{equation}

Finally, we use the parabolic regularity theory to improve the decay estimate of mean oscillations of $u$ and then complete the proof.
Due to \eqref{pp} and \eqref{ppp} , there exists $\rho_1 \in (\rho/2,\rho)$ such that
\begin{equation}
\int_{B(x_1,\rho_1)}|u(x,t_1-\rho_1^2)|^2\,dx \leq N \rho^{2+\frac{5}{3}},\,\,
\quad\int_{t_1-\rho_1^2}^{t_1}\int_{S(x_1,\rho_1)}|u|^3 \,dx\,dt \leq N \rho^{2+\frac{5}{2}}.
\label{oo}
\end{equation}
Let $v$ be the unique weak solution to the heat equation
\begin{equation*}
\partial_t v-\Delta v=0 \ \ \text{in} \ \ Q(z_1,\rho_1)
\end{equation*}
with the boundary condition $v_i=u_i$ on $\partial Q(z_1,\rho_1)$. It follows from the standard estimates for heat equation, H\"{o}lder's inequality, and \eqref{oo} that
\begin{align}
 &\sup_{Q(z_1,\rho_1/2)}|\nabla v| \nonumber\\
 &\leq N \rho_1^{-6}\int_{t_1-\rho_1^2}^{t_1}\int_{S(x_1,\rho_1)}|v|\,  dx\,dt+N\rho_1^{-5}\int_{B(x_1,\rho_1)}|v(x,t_1-\rho_1^2)|\,dx\nonumber\\
 &\leq N \rho^{-2+\frac{5}{6}}.
\label{uu}
\end{align}
Denote $w=u-v.$ Then $w$ satisfies the linear parabolic equation
\begin{equation*}
\partial_t w_i -\Delta w_i = -\partial_j(u_iu_j)-\partial_i (p-[p]_{x_1,\rho})+f_i
\end{equation*}
with zero boundary condition. By the classical $L_p$ estimate for  parabolic equations, we have
\begin{align*}
\nonumber\|\nabla w\|_{L_{3/2}(Q(z_1,\rho_1))} &\leq N \||u|^2\|_{L_{3/2}(Q(z_1,\rho_1))} +N \|p-[p]_{x_1,\rho}\|_{L_{3/2}(Q(z_1,\rho_1))}\\&\,\,
+N\rho_1\|f\|_{L_{3/2}(Q(z_1,\rho_1))},
\end{align*}
which together with \eqref{ppp} and the condition $f\in L_{6,\text{loc}}$ yields
\begin{equation}
                        \label{eq10.42}
\int_{Q(z_1,\rho_1)}|\nabla w|^{3/2}\,dz\le N\rho^{11/2}.
\end{equation}
Since $|\nabla u|\leq |\nabla w|+|\nabla v|$, we combine \eqref{uu} and \eqref{eq10.42} to obtain, for any $r \in (0,\rho/4)$, that
\begin{equation*}
\int_{Q(z_1,r)}|\nabla u|^{3/2} \,dz \leq N \rho^{11/2}+ r^{6}\rho^{-3+\frac{5}{4}}.
\end{equation*}
Upon taking $r=\rho^{29/24}/4$ (with $\rho$ small), we deduce
\begin{equation}
                                \label{eq11.50}
\int_{Q(z_1,r)}|\nabla u|^{3/2}\,dz \leq N r^{\beta},
\end{equation}
where
\begin{equation*}\beta=\frac{132}{29}>6-\frac 3 2.
\end{equation*}
Since $u\in \cH^1_{3/2,\text{loc}}$ is a weak solution to \eqref{eq11.55}, it then follows from Lemma \ref{lem11.31}, \eqref{eq11.50}, and \eqref{ppp} with $r$ in place of $\rho$ that
\begin{align*}
\nonumber &\int_{Q(z_1,r)}|u-(u)_{z_1,r}|^{3/2}\,dz \\
\nonumber &\leq N r^{3/2}\int_{Q(z_1,r)}\big|\nabla u|^{3/2}+(|u|^2)^{3/2}+|p-[p]_{x_1,r}|^{3/2}+r^{3/2}|f|^{3/2}\big)\,dz \\
\nonumber &\leq N r^{\beta+3/2}.
\end{align*}
By Campanato's characterization of H\"older
continuous functions (see, for instance, \cite[Lemma 4.3]{Lieb_96}), $u$ is H\"{o}lder continuous in a neighborhood of $z_0$. This completes the proof of Theorem \ref{th2}.

Theorem \ref{mainthm} then follows from Theorem \ref{th2} by applying Proposition \ref{prop1}. Finally, Theorem \ref{th3} is deduced from Theorem \ref{mainthm} by using standard argument in the geometric measure theory, which is explained, for example, in \cite{CKN_82}.

\section*{Acknowledgements.} H. Dong was partially supported by the NSF under agreement DMS-1056737. X. Gu was sponsored by the China Scholarship Council for one year study at Brown University and was partially supported by the NSFC (grant No. 11171072) and the Innovation  Program  of  Shanghai  Municipal  Education  Commission  (grant
No. 12ZZ012).

\end{document}